\theoremstyle{plain}
\newtheorem{thm}{Theorem}
\newtheorem{lem}{Lemma}
\newtheorem{prop}{Proposition}
\newtheorem{cor}{Corollary}
\theoremstyle{remark}
\newtheorem{df}{Definition}
\newtheorem{ass}{Assumption}
\newtheorem{rem}{Remark}
\newtheorem{ex}{Example}
\newcommand{\norm}[1]{\left\|{#1}\right\|}
\newcommand{\normfty}[1]{\left\|{#1}\right\|_{\infty,\mu}}
\newcommand{\argmin}{\mathop{\rm argmin}}
\newcommand{\argmax}{\mathop{\rm argmax}}
\newcommand{\pen}{\mathop{\rm pen}\nolimits}
\newcommand{\Var}{\mathop{\rm Var}\nolimits}
\newcommand{\E}{{\mathbb{E}}}
\newcommand{\N}{{\mathbb{N}}}
\newcommand{\Q}{{\mathbb{Q}}} 
\newcommand{\R}{{\mathbb{R}}}
\newcommand{\Z}{{\mathbb{Z}}}
\newcommand{\sL}{{\mathscr{L}}} 
\newcommand{\sM}{{\mathscr{M}}}
\newcommand{\sP}{{\mathscr{P}}}
\DeclareMathAlphabet{\mathscrbf}{OMS}{mdugm}{b}{n}
\newcommand{\sbP}{{\mathscrbf{P}}}
\newcommand{\cA}{{\mathcal{A}}}
\newcommand{\cB}{{\mathcal{B}}}
\newcommand{\cC}{{\mathcal{C}}}
\newcommand{\cE}{{\mathcal{E}}}
\newcommand{\cF}{{\mathcal{F}}}
\newcommand{\cI}{{\mathcal{I}}}
\newcommand{\cM}{{\mathcal{M}}}
\newcommand{\cP}{{\mathcal{P}}}
\newcommand{\cS}{{\mathcal{S}}}
\newcommand{\cX}{{\mathcal{X}}}
\newcommand{\gh}{{\mathbf{h}}}
\newcommand{\gj}{{\mathbf{j}}}
\newcommand{\gk}{{\mathbf{k}}}
\newcommand{\gp}{{\mathbf{p}}}
\newcommand{\gr}{{\mathbf{r}}}
\newcommand{\gL}{{\mathbf{L}}} 
\newcommand{\gM}{{\mathbf{M}}}
\newcommand{\gP}{{\mathbf{P}}}
\newcommand{\gX}{{\mathbf{X}}}
\newcommand{\bs}[1]{\boldsymbol{#1}}
\newcommand{\bsX}{{\bs{X}}}
\newcommand{\gbeta}{\bs{\beta}}
\newcommand{\eps}{{\varepsilon}}
\newcommand{\et}{^{\star}}
\newcommand{\pol}[2]{\cP_{#1,#2}}
\newcommand{\poldir}[2]{\cP^{dir}_{#1,#2}}
\renewcommand{\deg}{\mathrm{deg}}
\newcommand{\1}{1\hskip-2.6pt{\rm l}}
\renewcommand{\ge}{\geqslant}
\begin{document}

\title{A model-based approach to density estimation in sup-norm}
\author{Guillaume Maillard}

\maketitle
%\begin{frontmatter}
%%%%%%%%%%%%%%%%%%%%%%%%%%%%%%%%%%%%%%%%%%%%%%
%%                                          %%
%% Enter the title of your article here     %%
%%                                          %%
%%%%%%%%%%%%%%%%%%%%%%%%%%%%%%%%%%%%%%%%%%%%%%
%\title{A model-based approach to density estimation in sup-norm}
%\title{A sample article title with some additional note\thanksref{T1}}
%\runtitle{Model-based sup-norm estimation}
%\thankstext{T1}{A sample of additional note to the title.}

%\begin{aug}
%%%%%%%%%%%%%%%%%%%%%%%%%%%%%%%%%%%%%%%%%%%%%%%%
%%% ORCID can be inserted by command:         %%
%%% \orcid{0000-0000-0000-0000}               %%
%%%%%%%%%%%%%%%%%%%%%%%%%%%%%%%%%%%%%%%%%%%%%%%%
%\author[A]{\inits{GM}\fnms{Guillaume}~\snm{Maillard}\ead[label=e1]{guillaume.maillard@ensai.fr}}
%%\author[B]{\inits{???}\fnms{???}~\snm{???}\ead[label=e2]{???@???}}
%%\author[B]{\inits{???}\fnms{???}~\snm{???}\ead[label=e3]{???@???}}
%%%%%%%%%%%%%%%%%%%%%%%%%%%%%%%%%%%%%%%%%%%%%%%
%%% Addresses                                %%
%%%%%%%%%%%%%%%%%%%%%%%%%%%%%%%%%%%%%%%%%%%%%%%
%\address[A]{Department of Mathematics,
%University of Luxembourg,
%Esch-sur-Alzette,
%Grand Duchy of Luxembourg\printead[presep={,\ }]{e1}}
%%\address[B]{???\printead[presep={,\ }]{e2,e3}}
%\end{aug}

\begin{abstract}
We define a general method for finding a quasi-best approximant in sup-norm to a target density belonging to a given model, based on independent samples drawn from distributions which average to the target (which does not necessarily belong to the model). We also provide a general method for selecting among a countable family of such models. These estimators satisfy oracle inequalities in the general setting. The quality of the bounds depends on the volume of sets on which $|p-q|$ is close to its maximum, where $p,q$ belong to the model (or possibly to two different models, in the case of model selection). This leads to optimal results in a number of settings, including piecewise polynomials on a given partition and anisotropic smoothness classes. Particularly interesting is the case of the single index model with fixed smoothness $\beta$, where we recover the one-dimensional rate: this was an open problem.
%In particular, using piecewise polynomials on dyadic partitions, we recover optimal rates of convergence for classes of functions with anisotropic smoothness, with optimal dependence on semi-norms measuring the smoothness of the density in the coordinate directions. Moreover, our method adapts to the anisotropic smoothness within a given range (depending on the degree of the polynomials).
\end{abstract}

%\begin{keyword}
%\kwd{Density estimation}
%\kwd{minimax theory}
%\kwd{model selection}
%\kwd{robust estimation}
%\end{keyword}

%\end{frontmatter}

%\author{Yannick BARAUD}
%\author{H\'el\`ene HALCONRUY}
%\subjclass{Primary 62F35, 62G35, 62G07; Secondary 62C20, 62G10}
%\thanks{This project has received funding from the European Union's Horizon 2020 research and innovation programme under grant agreement N\textsuperscript{o} 811017}

%\date{\today}

%SECTION
\section{Introduction}

In regression, classification and density estimation, the model-based approach to estimation \citep{Massart2014} consists in specifying a collection of \emph{models}, together with a standard method for performing estimation within each model and a \emph{penalty} or \emph{model selection criterion} for selecting among the models. In density estimation, this approach can for example be based on maximum likelihood or least-squares for estimating within a model \cite[Example 1]{Massart2014} and cross-validation for selecting among models. 

This leads to a number of desirable practical and theoretical properties. 
First, the approach is very flexible and general since usually, a wide variety of different model collections are compatible with the basic method.
Moreover, the analysis of the risk of model-based estimators naturally subdivides into an "approximation-theoretic" part dealing with the approximation properties of the model $m$, and a "statistical part" dealing with the difficulty of estimating within $m$, which can be solved separately \citep{Luxburg2011}. 
Appropriate penalties can be derived from concentration inequalities for (weighted) empirical processes \cite[Chapter 1]{Massart2007}. The resulting model selection estimators optimize the tradeoff between the approximation error and the penalty \cite[Section 3]{Barron1999}.
%If a tight enough bound on the generalization error is available, it can be used to construct a penalty for model-selection, and the resulting estimator will then optimize the bias-variance tradeoff. 
This naturally leads to minimax-adaptive estimators, provided the model collection is well chosen \cite[Section 1.4]{Barron1999}.  

In density estimation, the model-based approach has mainly been used together with least-squares and maximum likelihood methods which target the minimizer of the squared $L^2$ and Küllback-Leibler distance to the underlying density. However, if we are interested instead in some other distance, the least-squares or maximum likelihood estimates may be arbitrarily far from optimal, as remarked by Devroye and Lugosi in the case of the $L^1$ loss \cite[Chapter 6]{Devroye2001}.  One is then left with the task of devising a data-driven method to minimize the given distance $d$ over a model $m$. The difficulty here is that empirical risk  minimization cannot be used in general, for lack of a suitable contrast function.  This problem was first solved by \citet[Chapter 6]{Devroye2001} in the case of the $L^1$ loss and by \cite{Baraud2016rho_estim} in the case of the Hellinger distance. More recently, \cite{Baraud2021} devised a general strategy called $\ell-$estimation, which applies to all $L^p$ losses for $p \in [1,+\infty)$ (among others), but \emph{not} however to the $L^\infty$ distance in general. He also did not address the problem of model selection.

In this article, we treat the case of the sup-norm loss, establishing a general method for model-based density estimation in $L^\infty$. Our method results from the application of a variant of Baraud's $\ell-$estimation to a certain parametrized family of semi-norms approximating the essential supremum. The estimation error of this method depends mainly on the measure of sets on which elements of the model $m$ remain close to their extremal value. In addition, we develop an entirely data-driven method for model selection, using penalties derived from concentration inequalities.  

As an application, we consider the class of piecewise polynomial functions on dyadic partitions of $\mathbb{R}^d$ and show that the resulting model-based estimator is minimax-adaptive over classes of functions with anisotropic smoothness. Our result improves on what was previously known in the literature for non model-based estimators: not only does our estimator converge at the optimal rate (a property already established by \cite{Lepski2013} for his adaptive kernel method), it also depends optimally on the underlying density, up to a constant depending only on the dimension and the degree of the polynomials.

This article is structured as follows. First, the setting is introduced and the main notation is defined in Section \ref{sect-2}. The model-based estimator is defined in Section \ref{sec-est_1mod} and a general oracle inequality is established. This general result is applied to models of piecewise polynomials in Section \ref{sec-piec_poly-1mod}. A minimax lower bound establishes the optimality of our estimator up to logarithmic factors. Section \ref{sec:mod_select_method} addresses the model selection problem in the general setting, resulting in an estimator which satisfies an oracle inequality. In Section \ref{sec:dyad_mod_coll}, we consider the case of piecewise polynomials on \emph{regular dyadic partitions}, where one must select among such partitions, and show that our assumptions hold in that case. In Section \ref{sec-rates}, the resulting estimator is shown to be minimax-adaptive on classes of functions with anisotropic smoothness. A matching minimax lower bound is established, based on a result of \citet{Lepski2013}.  
Proofs of these results can be found in the Supplementary Material \citep{Proofs}.
% However, to the best of our knowledge, no-one has yet invented a method for model-based density estimation in $L^\infty$ distance. This distance is interesting since it yields the strongest  notion of convergence among commonly used distances, aside from the Külback-Leibler divergence with which it is incomparable.
%
%SECTION
\section{Setting and notation}\label{sect-2}
Let $(E,\cE,\mu)$ be a measure space, with $\sigma-$finite measure $\mu$. For any measurable function $f$, let
\begin{equation}
   \normfty{f} = \sup \left\{ r \geq 0 : \mu \left( \left\{ x \in E : |f(x)| \geq r \right\} \right) > 0 \right\}  
\end{equation}
denote its essential supremum and let 
$\sL_{\infty}(E,\mu)$ be the set of measurable functions $f$ on $(E,\cE,\mu)$ such that $\normfty{f} < +\infty$. Let $L_\infty(E,\mu)$ denote the associated set of equivalent classes for the relation of equality $\mu-$almost everywhere. The topic of this article is density estimation on $L^\infty(E,\mu)$ with respect to the norm $\normfty{\cdot}$.

We assume that the observations $X_{1},\ldots,X_{n}$ are independent but not necessarily i.i.d, which allows to consider possible outliers. Let $P_{1}\et,\ldots,P_{n}\et$ denote their marginals. We assume that the marginals have densities $p_{1}\et,\ldots,p_{n}\et$ belonging to $L_\infty(E,\mu)$ - otherwise, estimation in $L_\infty$ norm is impossible. Throughout this article,  $\gP\et=\bigotimes_{i=1}^{n}P_{i}\et$ denotes the distribution of the observation $\bsX=(X_{1},\ldots, X_{n})$, and $\gp \et = (p_{1}\et,\ldots,p_{n}\et)$ denotes the corresponding $n-$uplet of probability densities. Moreover, $P\et$ denotes the mixture distribution, $P\et = \frac{1}{n} \sum_{i = 1}^n P_i\et$, and $p\et$ denotes the corresponding probability density. In case the data is not truly i.i.d, the estimators considered in this article estimate $p\et$: in particular, they are robust to small departures from the i.i.d assumption (in the $L^\infty$ sense).

%SUBSECTION
\subsection{Notation \label{sect-2.1}}
Bold capitals $\gP$ will be used to denote either the product measure $\gP=\bigotimes_{i=1}^{n}P_{i}$ or the $n$-uplet $(P_{1},\ldots,P_{n})$,
depending on the context. The notation $\E[g(\bsX)]$ is to be interpreted under the assumption that $\gX \sim \gP\et$, while $\E_{S}[f(X)]$ denotes the expectation of $f(X)$ when $X \sim S$. The same conventions apply to $\Var\left( g(\bsX)\right)$ and $\Var_{S}\left( f(X)\right)$. The same letter will always be used to denote a measurable function $q$ and the corresponding (signed) measure $Q = q d\mu$: lowercase letters refer  to functions and uppercase letters, to measures.

In addition, we shall use the following standard notation. For $x\in\R$, $x_{-} =\max\{0,-x\}$; for $x\in\R^{d}$, $B(x,r)$ denotes the closed Euclidean ball centered at $x$ with radius $r\ge 0$. For a positive integer $d$, $L_{\infty}(\R^{d})$ means $L_{\infty}(E,\mu)$ when $E=\R^{d}$, $\cE$ is the Borel $\sigma$-algebra and $\mu=\lambda$ is the Lebesgue measure on $\R^{d}$.
%\corb{La notation me g\^ene un peu car ce n'est pas la norme dans $\L_{\infty}(E,\mu)$ m\^eme si c'est bien une norme dans $\sL_{\infty}(E,\mu)$. \coro{Pourquoi veut-tu travailler avec $\L_{\infty}(E,\mu)$?}}

%SUBSECTION
\subsection{Models and losses\label{sect-2.2}}
Denote by $\cM$ a collection of models $m$, each of which is a subset of $\sP = L_\infty(E,\mu) \cap L_1(E,\mu)$. For reasons of technical convenience, we do not impose that the models $m$ consist of densities. In the following, we will always assume that $\cM$, the model collection, as well as the models $m \in \cM$, are {\em at most countable} in order to avoid measurability issues. Let $\sM$ denote the union of all the models: $\sM = \cup_{m \in \cM} m$. In particular,$\sM$ is countable. Since most of the models used by statisticians are separable, this assumption is not restrictive in practice: one can always replace an uncountable, separable model $\overline{m}$ by a dense countable subset $m$, without changing the approximation error. 

%We always choose for $p$ a convenient version of $dP/d\mu$, typically with some specific properties to be used for approximation purposes. 

Given the observation $\bsX$ and a model $m$, we want to design an estimator $\widehat p_m =\widehat p_m(\gX)$ of $p\et$ with values in $m$ which is as close as possible to $p\et$ in norm $\normfty{\cdot}$. 
%Actually, our target will be the mixture density 
%$p\et$
%and we shall evaluate the performance of $\widehat p_m$ using the ess-sup distance $\normfty{\widehat p_m - p\et}$. 
%The smaller this quantity, the better the estimator. 
Since $\widehat p_m \in m$ by definition, $\normfty{p\et - \widehat p_m}$ is lower bounded by 
\begin{equation}
   \inf_{q \in m}\normfty{q - p\et}=d_{\infty,\mu}(p\et,m), 
\end{equation}
the \emph{approximation error} of the model $m$ in $L^\infty(E,\mu)$.
The best that can be expected of $\widehat p_m$ is that $\normfty{p\et - \widehat p_m}$ be close to $d_{\infty,\mu}(p\et,m)$. This term cancels when $p\et \in \overline{m}$, where 
\begin{equation}
\overline{m}=\{p \in \sP\,| \inf_{q \in m} \normfty{p - q} = 0\},
\label{eq-overM}
\end{equation}
which generalizes the case of $\gp \et = (p,\ldots,p)$ for some $p \in m$ (the "true model" case).

\section{Estimator on a single model} \label{sec-est_1mod}
\subsection{Definition and properties in a general context}
To achieve model-based estimation in the norm $\normfty{\cdot}$, we adapt the general method of $\ell-$estimation introduced by \cite{Baraud2021}. For a given norm $\norm{\cdot}$ and $p,q \in m \subset B$ (where $B$ is a function space), this method relies on finding suitable measurable functions $g_{p,q}$ such that
\begin{itemize}
    \item $\int (p-q) g_{p,q} d\mu = \norm{p - q}$ 
    \item For all $f \in B, \int f g_{p,q} d\mu \leq \norm{f}$
    \item $\frac{1}{n} \sum_{i = 1}^n g_{p,q}(X_i)$ is close to its expectation (for all $p,q \in m$).
\end{itemize}
In the case of the norm $\normfty{\cdot}$ and the space $B = L^\infty(E,\mu)$, the first two requirements cannot be simultaneously satisfied in general, so we shall instead seek a suitable approximation of  $\normfty{p - q}$ by $\int (p-q) g_{p,q} d\mu$, for some  $g_{p,q}$ such that $\int |g_{p,q}| d\mu \leq 1.$ To that end, fix a VC class of measurable sets $\cC$, with VC-dimension $V$. For any $f \in L_1(E,\mu)$ and any $h > 0$, let
\begin{equation} \label{eq_def_norm_h}
   |f|_{h} = \sup_{C \in \cC} \frac{1}{\mu(C) + h} \left| \int_C f d\mu \right|. 
\end{equation}
This semi-norm is a norm whenever the sets of $\cC$ have finite measure and generate the Borel sigma-algebra: this will be the case with all the examples which we will consider.

\begin{rem}
    The parameter $h$ controls the quality of the approximation to $\normfty{\cdot}$. When $h = 0$,
    \[ |\cdot|_0 = \normfty{\cdot} \]
    for a sufficiently "nice" measure space $(E,\mu)$ and sufficiently rich class $\cC$. In particular this is the case when $(E,\mu)$ is a finite-dimensional space with Lebesgue measure and $\cC$ satisfies the assumptions of the Lebesgue differentiation theorem, see Proposition \ref{prop_consistence}. However, the supremum in the definition of $|f|_0$ typically is not realized on any set $C$ with $\mu(C) > 0$, unless $\argmax \{|f| \}$ has non-zero measure. Rather, the idea is to exploit the continuity at $0$:
    \[ \lim_{h \to 0} |f|_h  = |f|_0 = \normfty{f} \]
    for all $f \in L^\infty \cap L^1$, under appropriate conditions (see Proposition \ref{prop_consistence}). Thus, the role played by $h$ is similar to that of a bandwidth in kernel density estimation.
\end{rem}

For practical purposes, it is necessary to select, for each pair of candidate densities $p,q$ belonging to the model $m$, an appropriate "test set" $C_h(p,q)$ which approaches the supremum in equation \eqref{eq_def_norm_h}. The following definition quantifies the approximation error. 

\begin{df}
Let $h > 0$ and $C_h : \sP^2 \to \cC$ be a function. Let then
\begin{equation} \label{eq_lb_intCh}
   1 - \varepsilon_m(h,C_h) = \inf_{(p,q) \in m^2} \left\{ \frac{\left|\int_{C_h(p,q)} (p - q)d\mu \right|}{(\mu(C_h(p,q)) + h)|p-q|_h} \right\}.
\end{equation}
Let also $\eps(h,C_h) = \eps_{\sP}(h,C_h)$.
\end{df}

In reality, we only need $C_h$ to be defined on the model (or models, if model selection is used). However, it will be convenient to not have to keep track of several functions $C_{h,m}$.   Thus, in the following, we will always treat $C_h$ as a globally defined function. Note also that $\eps(h,C_h)$ can be made arbitrarily small by choosing $C_h$ appropriately (it is an optimization error).

\begin{df}
Let $m \subset \sP$ be a model, $h > 0$ and $C_h: \sP^2 \to \cC$ be a function. 
Let then $\tau_h(p,q) \in \{-1,1 \}$ be the sign of $\int_{C_h(p,q)} (p - q)d\mu $ and define
\begin{equation}
   t_{p,q}^{(h,C_h)} = \tau_h(p,q) \frac{P\left(C_h(p,q) \right) - \1_{C_h(p,q)}}{\mu\left(C_h(p,q) \right) + h}, 
\end{equation}
as well as the associated \emph{T-test}
\begin{equation} \label{eq_def_Ttest}
   T^{(h,C_h)}(\gX,p,q) = \frac{1}{n} \sum_{i = 1}^n t_{p,q}^{(h)}(X_i). 
\end{equation}
Set
\begin{equation} \label{eq_def_Tm}
   T_m^{(h,C_h)}(\gX,p) = \sup_{q \in m} T^{(h,C_h)}(\gX,p,q). 
\end{equation}
An $h$-approximate uniform-norm estimator (hun-estimator) associated with the class $\cC$, the model $m$, the parameter $h>0$, the set function $C_h$ and the tolerance $\delta > 0$ is, by definition, a random element $\widehat{p}_m^{(h,C_h)} \in m$ such that
\begin{equation}
   T_m^{(h,C_h)}\left(\gX,\widehat{p}_m^{(h,C_h)} \right) \leq \inf_{p \in m}  \left\{T_m^{(h,C_h)}(\gX,p) \right\} + \delta. 
\end{equation}
\end{df}

Since $m$ is countable and $\delta > 0$, $\widehat{p}_m^{(h,C_h)}$ is well-defined. $\delta > 0$ represents an optimization error and can be made arbitrarily small. To simplify, the notation $\widehat{p}_m^{(h,C_h)}$ ignores the dependence of $\widehat{p}_m^{(h)}$ on $\delta$. In the following, we shall also drop the subscript $C_h$ in the notation introduced above, whenever the definition of $C_h$ is clear from the context.

\begin{rem}
    Set $\eps = \eps_m(h,C_h)$ and
    \[ \Delta_{p\et}^{(h,C_h)}(p,q) = \tau_h(p,q) \frac{\int_{C_h(p,q)} (p - p\et) d\mu}{\mu \left(C_h(p,q) \right) + h} \]
    and note that for any non-random $p,q$,
    \[ \Delta_{p\et}^{(h,C_h)}(p,q) = \mathbb{E} \left[ T^{(h,C_h)}(\gX,p,q) \right]. \]
    Two simple facts are fundamental to the construction of our estimator:
    \begin{enumerate}
        \item For any $p,q \in m$,
        \[ |p - p\et|_h \geq \Delta_{p\et}^{(h,C_h)}(p,q) \geq (1-\eps)|p - q|_h - |q - p\et|_h \geq (1-\eps)|p - p\et|_h - (2-\eps) |q-p\et|_h. \]
        The first inequality above is obvious while the second follows immediately from the definition of $C_h(p,q)$ and the triangle inequality.
        In particular, if $p\et \in m$ (\emph{true model case}), then for any other $p \in m$,
        \[ |p - p\et|_h \geq \sup_{q \in m} \left\{  \Delta_{p\et}^{(h,C_h)}(p,q)\right\}  \geq (1-\eps)|p - p\et|_h \]
        and hence, up to $\eps = \eps_m(h,C_h)$,
        \begin{equation} \label{eq_rem_pstar-argmin}
            p\et \in \argmin_{p \in m} \sup_{q \in m} \left\{ \Delta_{p\et}^{(h,C_h)}(p,q) \right\}.
        \end{equation}
        \item Obviously, for any $p,q \in m$,
        \[ \left|T^{(h,C_h)}(\gX,p,q) - \Delta_{p \et}^{(h,C_h)}(p,q) \right| \leq \sup_{C \in \cC} \left| \frac{\sum_{i = 1}^n \1_C(X_i) - P_i\et(C)}{n(\mu(C) + h)} \right| := \sup_{C \in \cC} Z_C(h). \]
        Since the right-hand side of the above inequality is independent of $p,q,$ we get
        \begin{equation} \label{eq_rem_ubd-dev-Z}
            \sup_{p,q \in m} \left|T^{(h,C_h)}(\gX,p,q) - \Delta_{p \et}^{(h,C_h)}(p,q) \right| \leq  \sup_{C \in \cC} Z_C(h),
        \end{equation}
        whatever the model $m$. Moreover, for any given $C \in \cC$ one has in view of the definition of the $|\cdot|_h$ norm
        \begin{equation} \label{eq_rem_ubd-var-Z}
            \Var \left( Z_C(h) \right) = \frac{\sum_{i = 1}^n P_i\et(C) (1 - P_i\et(C))}{n^2 (\mu(C) + h)^2} \leq \frac{|p\et|_h}{n(\mu(C) + h)} \leq \frac{|p\et|_h}{nh}.
        \end{equation}
        Hence, for well-behaved collections $\cC$ we may expect that $\sup_{C \in \cC} Z_C(h)$ is small if $nh \to + \infty$.
    \end{enumerate}
\end{rem}

Replacing $\Delta_{p^*}^{(h,C_h)}(p,q)$ by its estimator $T^{(h,C_h)}(\gX,p,q)$ in equation \eqref{eq_rem_pstar-argmin} yields the definition of the hun-estimator (up to the tolerance $\delta$). Equation \eqref{eq_rem_ubd-var-Z} gives an estimate of the statistical error thus made. Another source of error is the approximation of the $L^\infty$ norm by $|\cdot|_h$. For $\widehat{p}_m^{(h)}$ to be a valid estimator in $L^\infty$ norm, it is necessary that $|\cdot|_h$ provide an adequate approximation to $\normfty{\cdot}$ on the model. Clearly, $|\cdot|_h$ does not uniformly approximate $\normfty{\cdot}$ on $\sP$, so this property is model-dependent: this motivates the following definition.

\begin{df}
For any model $m \subset \sP$ and any $h > 0$, let 
\begin{equation}
    \kappa_m(h) = \inf_{p,q \in m} \frac{|p - q|_h}{\normfty{p-q}}. 
\end{equation}
\end{df}

This defines a function $\kappa_m: (0,+\infty) \to [0,1]$ which can be seen to have the following properties.

\begin{lem} \label{lem_prop_kap_m}
For any model $m \subset \sbP$,
\begin{itemize}
    \item $\kappa_m$ is non-increasing
    \item If $\kappa_m(h_0) > 0$ for some $h_0 > 0$, then $\kappa_m(h) > 0$ for all $h > 0$.
    \item $\kappa_m$ is continuous, more precisely 
    \begin{equation}
        |\kappa_m(h_1) - \kappa_m(h_2)| \leq \left| 1 - \frac{h_1 \wedge h_2}{h_1 \vee h_2} \right| \kappa_m(h_1 \wedge h_2).
    \end{equation}
\end{itemize}
\end{lem}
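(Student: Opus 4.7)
The plan is to derive all three properties from a single pointwise inequality: for each $C\in\cC$ with finite measure and any $0<h_1\le h_2$, cross-multiplication (using $\mu(C)\ge 0$) gives $(\mu(C)+h_1)h_2 \ge h_1(\mu(C)+h_2)$, hence
\begin{equation}
\frac{h_1}{h_2}\,\frac{1}{\mu(C)+h_1} \;\le\; \frac{1}{\mu(C)+h_2} \;\le\; \frac{1}{\mu(C)+h_1}.
\end{equation}
Multiplying by $\ab{\int_C f\,d\mu}$ and taking $\sup_{C\in\cC}$ yields, for every $f\in L_1(E,\mu)$,
\begin{equation}\label{eq_key_h_ineq}
\frac{h_1}{h_2}\,\ab{f}_{h_1} \;\le\; \ab{f}_{h_2} \;\le\; \ab{f}_{h_1}.
\end{equation}

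First I would use the right-hand inequality in \eqref{eq_key_h_ineq} applied to $f=p-q$: dividing by $\normfty{p-q}$ and taking $\inf_{(p,q)\in m^2}$ shows $\kappa_m(h_2)\le \kappa_m(h_1)$, establishing monotonicity. Next I would use the left-hand inequality similarly: dividing by $\normfty{p-q}$ and taking the infimum yields
\begin{equation}
\kappa_m(h_2) \;\ge\; \frac{h_1}{h_2}\,\kappa_m(h_1).
\end{equation}
Combining both bounds gives, for $h_1\le h_2$,
\begin{equation}
0 \;\le\; \kappa_m(h_1)-\kappa_m(h_2) \;\le\; \pa{1-\frac{h_1}{h_2}}\kappa_m(h_1),
\end{equation}
which is exactly the continuity estimate with $h_1\wedge h_2 = h_1$ and $h_1\vee h_2=h_2$. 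Since the statement is symmetric in $(h_1,h_2)$, this proves the third bullet.

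The second bullet then follows immediately. If $\kappa_m(h_0)>0$, then for $h\le h_0$ monotonicity gives $\kappa_m(h)\ge\kappa_m(h_0)>0$, while for $h\ge h_0$ the lower bound just established gives $\kappa_m(h)\ge (h_0/h)\kappa_m(h_0)>0$.

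I do not foresee any real obstacle: the whole argument is a single algebraic manipulation of $(\mu(C)+h)^{-1}$ that commutes with the supremum over $C$ and the infimum over $(p,q)$, so nothing depends on the nature of $\cC$, of $m$, or of the underlying measure space. The only point deserving a brief justification is that passing from the pointwise-in-$C$ estimate to an estimate on $\ab{\cdot}_h$ requires the supremum defining $\ab{f}_h$ to be taken over the \emph{same} class $\cC$ for both values of $h$, which is built into the definition.
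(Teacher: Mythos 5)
Your proof is correct and rests on the same algebraic observation as the paper's, namely the comparison of $(\mu(C)+h)^{-1}$ at two values of $h$ commuting with the supremum over $C$; the paper organizes it as a one-sided Lipschitz estimate $\bigl||f|_{h_1}-|f|_{h_2}\bigr|\le |f|_{h_1\wedge h_2}\,\frac{|h_1-h_2|}{h_1\vee h_2}$ combined with the Lipschitz-of-infimum inequality, whereas you package it as a two-sided sandwich $\frac{h_1}{h_2}|f|_{h_1}\le|f|_{h_2}\le|f|_{h_1}$ and push that through the infimum. The reorganization is cosmetic (your version has the minor advantage of giving monotonicity and continuity uniformly from the sandwich rather than treating monotonicity as a separate remark), so this is essentially the paper's argument.
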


\begin{proof}
The first property is obvious from the definition, the second is a consequence of the third. 
To prove the last inequality, note that
\begin{equation} \label{eq_lip_kap}
   |\kappa_m(h_1) - \kappa_m(h_2)| \leq \sup_{p,q \in m} \left\{ \frac{1}{\normfty{p - q}} \bigl||p - q|_{h_1} - |p - q|_{h_2} \bigr| \right\}. 
\end{equation}
Moreover, for any $f \in L^\infty$,
\begin{align*}
   \bigl||f|_{h_1} - |f|_{h_2} \bigr| &\leq \sup_{C \in \cC} \left\{ \left| \int_C f \right| \left| \frac{1}{\mu(C) + h_1} - \frac{1}{\mu(C) + h_2} \right| \right\} \\
   &\leq \sup_{C \in \cC} \left\{ \frac{|h_1 - h_2| \left| \int_C f \right|}{(\mu(C) + h_1)(\mu(C) + h_2)} \right\} \\
   &\leq |f|_{h_1 \wedge h_2} \frac{|h_1 - h_2|}{h_1 \vee h_2}.
\end{align*}
Together with equation \eqref{eq_lip_kap}, this yields the result.
\end{proof}

Moreover, if $\cC$ generates the Borel $\sigma-$algebra and $m$ is a subset of a finite-dimensional vector space, then by equivalence of norms, $\kappa_m(h) > 0$ for all $h > 0$. To bound the stochastic error of the hun-estimator, we introduce the following empirical process:
\begin{df}
For any $h > 0$, let
\[
    Z(h) = \frac{1}{n} \sup_{C \in \cC} \left\{ \frac{1}{\mu(C) + h} \left| \sum_{i = 1}^n \1_C(X_i) - P_i^*(C) \right| \right\}.
\]
\end{df}
Note that this definition does not depend on the model $m$. The risk of the hun-estimator $\hat{p}_m^{(h,C_h)}$ may be related to the constant $\kappa_m(h)$ and the process $Z(h)$ as follows.
\begin{prop} \label{prop_or_ineq_Z}
For any model $m \subset \sP$, any $h > 0$ and any $\overline{p} \in m$,
\begin{equation} \label{eq_or_ineq_Z_pbar-h}
  (1-\varepsilon_m(h,C_h)) \left|\overline{p} - \hat{p}_m^{(h,C_h)} \right|_h \leq 2 |p \et - \overline{p}|_h + 2Z(h) + \delta.   
\end{equation}
As a consequence, writing $\eps = \eps_m(h,C_h)$ for short,
\begin{equation} \label{eq_or_ineq_Z}
   (1-\varepsilon)\kappa_m(h) \times \normfty{p \et - \hat{p}_m^{(h,C_h)}} \leq \left[2 + (1-\varepsilon) \kappa_m(h) \right] \inf_{p \in m} \left\{\normfty{p\et - p} \right\} + 2Z(h) + \delta.  
\end{equation}
\end{prop}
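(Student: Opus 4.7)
The plan is to combine the two ``fundamental facts'' listed in the remark preceding the proposition with the defining quasi-optimality property of the hun-estimator, and then translate the resulting bound from $|\cdot|_h$ to $\normfty{\cdot}$ using the constant $\kappa_m(h)$.

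First I would upper-bound $T_m^{(h,C_h)}(\gX, \overline{p})$. Since, by the first bullet of the remark, $\Delta_{p\et}^{(h,C_h)}(\overline p,q) \le |\overline p - p\et|_h$ uniformly in $q \in m$, and since $|T^{(h,C_h)}(\gX,\overline p,q) - \Delta_{p\et}^{(h,C_h)}(\overline p,q)| \le Z(h)$ uniformly in $q$ by the second bullet, taking the supremum over $q \in m$ gives
\begin{equation*}
T_m^{(h,C_h)}(\gX,\overline p) \le |\overline p - p\et|_h + Z(h).
\end{equation*}
Symmetrically, I would lower-bound $T_m^{(h,C_h)}(\gX, \widehat p_m^{(h,C_h)})$: specializing the supremum to $q=\overline p$ and applying the first bullet of the remark with $p = \widehat p_m^{(h,C_h)}$ and $q=\overline p$ yields
\begin{equation*}
T_m^{(h,C_h)}(\gX,\widehat p_m^{(h,C_h)}) \ge T^{(h,C_h)}(\gX,\widehat p_m^{(h,C_h)},\overline p) \ge (1-\eps)|\widehat p_m^{(h,C_h)}-\overline p|_h - |\overline p - p\et|_h - Z(h),
\end{equation*}
where $\eps = \eps_m(h,C_h)$.

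The defining property of the hun-estimator gives $T_m^{(h,C_h)}(\gX,\widehat p_m^{(h,C_h)}) \le T_m^{(h,C_h)}(\gX,\overline p) + \delta$. Chaining the two previous displays through this inequality and rearranging yields exactly \eqref{eq_or_ineq_Z_pbar-h}:
\begin{equation*}
(1-\eps)|\overline p - \widehat p_m^{(h,C_h)}|_h \le 2|\overline p - p\et|_h + 2Z(h) + \delta.
\end{equation*}

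For the second inequality, I would use two simple ingredients: the bound $|f|_h \le \normfty{f}$ (obvious from the definition, since $|\int_C f|/(\mu(C)+h) \le \normfty{f}\mu(C)/(\mu(C)+h) \le \normfty{f}$), and the definition of $\kappa_m(h)$, which gives $|\overline p - \widehat p_m^{(h,C_h)}|_h \ge \kappa_m(h)\normfty{\overline p - \widehat p_m^{(h,C_h)}}$. Substituting into \eqref{eq_or_ineq_Z_pbar-h} and applying the triangle inequality $\normfty{p\et - \widehat p_m^{(h,C_h)}} \le \normfty{p\et - \overline p} + \normfty{\overline p - \widehat p_m^{(h,C_h)}}$ gives, for every $\overline p \in m$,
\begin{equation*}
(1-\eps)\kappa_m(h)\normfty{p\et - \widehat p_m^{(h,C_h)}} \le [2 + (1-\eps)\kappa_m(h)]\normfty{p\et - \overline p} + 2Z(h) + \delta.
\end{equation*}
Taking the infimum over $\overline p \in m$ delivers \eqref{eq_or_ineq_Z}. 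There is no real obstacle here: everything is prepared in the remark, and the only care needed is to track signs correctly when passing through the supremum defining $T_m^{(h,C_h)}$ and to ensure that $|\cdot|_h \le \normfty{\cdot}$ before invoking $\kappa_m(h)$.
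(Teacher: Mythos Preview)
Your proof is correct and follows essentially the same route the paper lays out in the remark preceding the proposition: you combine the two ``fundamental facts'' with the quasi-optimality of $\widehat p_m^{(h,C_h)}$ to get \eqref{eq_or_ineq_Z_pbar-h}, and then pass to $\normfty{\cdot}$ via $|\cdot|_h \le \normfty{\cdot}$, the definition of $\kappa_m(h)$, and the triangle inequality. The only point worth making explicit is that the inequalities from fact~1 hold for \emph{all} $p,q\in m$ pointwise (not just in expectation), so applying them with the random $p=\widehat p_m^{(h,C_h)}$ is legitimate; you use this implicitly and correctly.
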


To handle the stochastic process $Z(h)$, we state and prove a uniform Bernstein inequality. First, define the following family of events.

\begin{df} \label{def_Omega}
Let $P \et = \frac{1}{n}\sum_{i = 1}^n P_i \et$ and
\begin{equation} \label{eq_def_gam}
   \Gamma = \log(\lceil \log_2 n \rceil) + \log \left(2 \sum_{j = 0}^{V \wedge n} \binom{n}{j} \right).  
\end{equation}
For any $x > 0$, let $\Omega_x$ denote the event on which
 \begin{equation} \label{eq_bern_unif}
   \frac{1}{n} \left| \sum_{i = 1}^n \1_C(X_i) - P_i\et(C) \right| \leq \max \left(29 \sqrt{P\et(C)} \sqrt{\frac{\Gamma + x}{n}}, 20 \frac{\Gamma + x}{n} \right), 
\end{equation}
for all $C \in \cC$.
\end{df}

This class of events will govern the statistical behaviour of all procedures analyzed in this article. First, we prove the following proposition.

\begin{prop} \label{prop_bern_unif}
The event $\Omega_x$ has probability $\mathbb{P}(\Omega_x) \geq 1 - 2e^{-x}$.
\end{prop}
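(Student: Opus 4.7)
The plan is to combine Bernstein's inequality, applied pointwise in $C$, with a peeling argument over $P\et(C)$ and a Vapnik-Chervonenkis union bound; the two logarithmic contributions to $\Gamma$, namely $\log(\lceil \log_2 n \rceil)$ and $\log\!\left( 2 \sum_{j=0}^{V \wedge n} \binom{n}{j}\right)$, come respectively from these two devices.

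For each fixed $C \in \cC$, the variables $Y_i(C) = \1_C(X_i) - P_i\et(C)$ are independent, centered, bounded in absolute value by $1$, and satisfy $\sum_{i=1}^n \Var(Y_i(C)) \leq n P\et(C)$. Bernstein's inequality then gives, for every $u > 0$,
\begin{equation*}
    \mathbb{P}\left( \tfrac{1}{n}\Bigl|\sum_{i=1}^n Y_i(C)\Bigr| \geq \sqrt{\tfrac{2 P\et(C) u}{n}} + \tfrac{u}{3n}\right) \leq 2 e^{-u},
\end{equation*}
which already carries the variance-dominated/range-dominated structure of the $\max$ in \eqref{eq_bern_unif}. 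To make this uniform in $C \in \cC$, I would peel the class dyadically: for $0 \leq k \leq K-1$ with $K = \lceil \log_2 n \rceil$, put $\cC_k = \{C : 2^{-k-1} < P\et(C) \leq 2^{-k}\}$, and deal with the residual class $\{C : P\et(C) \leq 1/n\}$ only through the range-dominated branch $20(\Gamma+x)/n$, which automatically exceeds $29\sqrt{P\et(C)(\Gamma+x)/n}$ as soon as $P\et(C) \leq 1/n$. Replacing $P\et(C)$ by the upper dyadic edge inside each slice costs only a factor $\sqrt{2}$ in the variance term, to be absorbed in the final constant.

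Within a given slice, the supremum is reduced to a finite union bound via a ghost-sample symmetrization: the original tail probability is controlled by twice that of a symmetrized version of the process, whose dependence on $C$ passes only through the trace $(\1_C(X_i))_{i}$ on the sample; Sauer-Shelah then caps the effective cardinality by $S_{\cC}(n) = \sum_{j=0}^{V \wedge n}\binom{n}{j}$, with the extra factor $2$ inside the logarithm of $\Gamma$ absorbing the symmetrization and the two-sided nature of the deviations. Applying the pointwise Bernstein estimate at level $u = \Gamma + x$ and taking a union bound over the $K$ slices and over $2 S_{\cC}(n)$ traces produces a total failure probability at most $2e^{-x}$, which is the claimed bound. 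The main technical hurdle is this VC reduction together with the careful bookkeeping of constants: one must successively absorb the peeling loss $\sqrt{2}$, the Bernstein constant $\sqrt{2}$, the symmetrization factor $2$, and the conversion of an additive sum $\sqrt{\cdot} + u/n$ into a multiplicative $\max$, and the Bernstein-type constants $29$ and $20$ that appear in \eqref{eq_bern_unif} are precisely the outcome of this bookkeeping.
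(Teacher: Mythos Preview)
Your plan is the right one and matches the paper's route (the proof is relegated to the Supplementary Material, and the paper explicitly points to Baraud~(2016, Theorem~3) as the template): the $\log(\lceil\log_2 n\rceil)$ in $\Gamma$ comes from the dyadic peeling over $P\et(C)$, and the $\log\bigl(2\sum_{j\le V}\binom{n}{j}\bigr)$ from the Sauer--Shelah reduction within each slice.

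One step deserves more care than your sketch gives it. You say that after symmetrization you ``apply the pointwise Bernstein estimate at level $u=\Gamma+x$'' and union-bound over traces. But once you condition on the sample(s), the variance of the Rademacher sum attached to a given trace is the \emph{empirical} count $\sum_i \1_C(X_i)$ (or $\sum_i|\1_C(X_i)-\1_C(X_i')|$ in the ghost-sample version), not $nP\et(C)$; the Bernstein inequality you stated at the outset, whose variance term is $P\et(C)$, no longer applies verbatim. Passing from the empirical variance back to the slice level $2^{-k}$ uniformly over $\cC_k$ is the genuine crux: one either first establishes a uniform multiplicative control of the counts (essentially the one-sided upper deviation, provable by a cruder argument), or one works with a self-normalized ratio and a hypergeometric tail bound on the pooled sample, in the spirit of the classical Vapnik--Chervonenkis relative-deviation argument. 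This conversion, rather than the absorption of the factors $\sqrt{2}$ and $2$ that you list, is where the constants $29$ and $20$ are actually determined, and it should be named alongside---or ahead of---the bookkeeping you flag as the main hurdle.
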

%Since the event on which equation \eqref{eq_bern_unif} holds does not depend on $h$, it is possible to bound the process $(Z(h))_{h > 0}$ with high probability by a deterministic function of $h$. This fact will be useful when considering the model selection problem (section ..).

A result similar to Proposition \ref{prop_bern_unif} was estalished by \citet[Theorem 3]{Baraud2016} using similar methods. However, his result is stated for \emph{suprema} of empirical processes over VC-classes and in particular, the variance term in the upper bound is the supremum of the variance of the empirical process over the class. What is novel about Proposition \ref{prop_bern_unif}, to the best of our knowledge, is that it provides a \emph{pointwise} bound of the empirical process at each $C \in \cC$ in terms of the variance of the process \emph{at C}, for independent and not necessarily iid random variables. Together with Proposition \ref{prop_or_ineq_Z}, Proposition \ref{prop_bern_unif} yields the following oracle inequality for the estimator $\hat{p}_m^{(h,C_h)}$.

\begin{thm}\label{thm_or_ineq_mod}
Let $p\et = \frac{1}{n} \sum_{i = 1}^n p_i \et$ and $\Gamma$ as in equation \eqref{eq_def_gam}.
With probability greater than $1 - 2e^{-x}$, for all countable models $m \subset \sP$, all $h > 0$ and all $\overline{p} \in m$,
\begin{equation} \label{eq_or_ineq_mod_pbar-h}
(1-\varepsilon_m(h,C_h)) \left|\overline{p} - \hat{p}_m^{(h,C_h)} \right|_h \leq 2 |p \et - \overline{p}|_h + \delta + \max \left( 58 \sqrt{\frac{|p\et|_h (\Gamma + x)}{h n}}, 40 \frac{\Gamma + x}{h n}  \right).    
\end{equation}
On the same event, writing $\eps = \eps_m(h,C_h)$ for short,
\begin{equation} \label{eq_or_ineq_mod}
    \begin{split}
        (1-\varepsilon)\kappa_m(h) \times \normfty{\hat{p}_m^{(h,C_h)} - p\et} &\leq \left[2 + (1-\varepsilon) \kappa_m(h) \right] \inf_{p \in m} \normfty{p - p\et}  + \delta \\ 
&\quad + \max \left( 58 \sqrt{\frac{|p\et|_h (\Gamma + x)}{h n}}, 40 \frac{\Gamma + x}{h n}  \right). 
    \end{split}
\end{equation}
\end{thm}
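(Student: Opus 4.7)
The plan is to combine the deterministic oracle inequalities from Proposition \ref{prop_or_ineq_Z} with the uniform Bernstein bound of Proposition \ref{prop_bern_unif} by controlling the empirical process $Z(h)$ pointwise in $h$. Since Proposition \ref{prop_or_ineq_Z} already gives both \eqref{eq_or_ineq_mod_pbar-h} and \eqref{eq_or_ineq_mod} with $2Z(h)$ in place of the max-expression, the entire task reduces to showing that on the event $\Omega_x$,
\begin{equation*}
  2 Z(h) \;\le\; \max\!\left( 58\sqrt{\tfrac{|p\et|_h(\Gamma+x)}{hn}},\; 40\,\tfrac{\Gamma+x}{hn} \right)
  \qquad \text{for every } h>0.
\end{equation*}
The event $\Omega_x$ has probability $\ge 1-2e^{-x}$ by Proposition \ref{prop_bern_unif}, and it depends on neither $m$, nor $\overline{p}$, nor $h$; hence the uniform quantification in the statement of the theorem is free once this deterministic bound on $Z(h)$ is obtained for every $h$.

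To bound $Z(h)$, I would fix $h>0$ and an arbitrary $C\in\cC$ and divide the inequality defining $\Omega_x$ by $\mu(C)+h$. The key observation is that $P\et(C)=\int_C p\et\,d\mu$, so by the very definition of $|\cdot|_h$ one has the pointwise control $P\et(C)\le |p\et|_h\,(\mu(C)+h)$. Substituted into the variance term of \eqref{eq_bern_unif} this yields
\begin{equation*}
  \frac{29\sqrt{P\et(C)}}{\mu(C)+h}\sqrt{\tfrac{\Gamma+x}{n}}
  \;\le\; 29\sqrt{\tfrac{|p\et|_h}{\mu(C)+h}}\sqrt{\tfrac{\Gamma+x}{n}}
  \;\le\; 29\sqrt{\tfrac{|p\et|_h(\Gamma+x)}{hn}},
\end{equation*}
where the second step uses $\mu(C)+h\ge h$. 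The deterministic term of \eqref{eq_bern_unif} is handled by the same inequality $\mu(C)+h\ge h$, giving $20(\Gamma+x)/(n(\mu(C)+h))\le 20(\Gamma+x)/(hn)$. Taking the supremum over $C\in\cC$ yields the claimed bound on $Z(h)$, and multiplying by $2$ produces the constants $58$ and $40$.

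Once this control of $Z(h)$ is in hand on $\Omega_x$, the two assertions of the theorem are obtained by plugging the bound directly into \eqref{eq_or_ineq_Z_pbar-h} and \eqref{eq_or_ineq_Z} respectively; uniformity over countable $m\subset\sP$, over $\overline{p}\in m$, and over $h>0$ is automatic because $\Omega_x$ is independent of these choices and the deterministic oracle inequalities of Proposition \ref{prop_or_ineq_Z} hold for every such triple. The only non-routine step is the chain $P\et(C)\le |p\et|_h(\mu(C)+h)\le |p\et|_h\cdot(\mu(C)+h)$ followed by $\mu(C)+h\ge h$, which is what converts a variance proportional to $P\et(C)$ into one proportional to $|p\et|_h/h$; I expect this to be the only place where a reader might want to pause, but it is really just unwinding the definition of the semi-norm.
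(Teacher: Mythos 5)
Your proof is correct and follows exactly the route the paper intends: combine the deterministic oracle inequalities of Proposition \ref{prop_or_ineq_Z} with the uniform Bernstein bound of Proposition \ref{prop_bern_unif}, converting the bound in terms of $P\et(C)$ into one in terms of $|p\et|_h$ via $P\et(C)\le|p\et|_h(\mu(C)+h)$ and $\mu(C)+h\ge h$, then take the supremum over $C\in\cC$ and multiply by two. The uniformity in $m$, $\overline{p}$ and $h$ indeed comes for free because $\Omega_x$ and $Z(h)$ do not depend on $m$ or $\overline{p}$, and $\Omega_x$ does not depend on $h$.
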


The quality of the bound provided by Theorem $1$ depends on the constant $\kappa_m(h)$. If $h_m$ is such that $\kappa_m(h_m) \geq \frac{1}{2}$ (say), then Theorem $1$ yields a "true" oracle inequality, with remainder term of order 
\[ \sqrt{\frac{|p\et|_{h_m} \Gamma}{n h_m}} \leq \sqrt{\frac{\normfty{p\et} \Gamma}{n h_m}}. \] 
Clearly, this value of $h_m$ depends strongly on the class $\cC$ and the model $m$. Later, we will show that $m,\cC$ can be chosen such that equation \eqref{eq_or_ineq_mod} yields the minimax convergence rate over classes of smooth functions. More generally, one can ask when a constant $h_m$ even exists. A sufficient condition for this is to have $\kappa_m(h) \to 1$ as $h \to 0$.

Existence of $h_m$ provides an oracle inequality with a fixed constant ($5$, say) in front of the approximation error and a remainder term of order $\mathcal{O}\left( 1/\sqrt{n} \right)$ as $n \to +\infty$ for a fixed model $m$. This is the expected rate of convergence for finite-dimensional models. We now show that, on $\mathbb{R}^d$ with Lebesgue measure $\mu$, there are universal classes of sets $\cC$ such that $\kappa_m(h) \to 1$ holds for all finite-dimensional vector space models $m$ over which $\normfty{\cdot}$ is a norm.

\begin{prop} \label{prop_consistence}
Assume that $\cC$ contains a sub-collection $\cC_0$ satisfying the following conditions:
\begin{itemize}
    \item For all $\delta > 0$, $\bigcup_{C \in \cC_{0,\delta}} \overline{C} = \mathbb{R}^d$, where
    \begin{equation}
       \cC_{0,\delta} = \left\{ C \in \cC_0 : \mathrm{diam}(C) \leq \delta \right\} 
    \end{equation}
    \item $\inf \left\{ \frac{\mu(C)}{\mathrm{diam}(C)^d} : C \in \cC_0 \right\} > 0$,
\end{itemize}
where $\mathrm{diam}(C)$ denotes the diameter of $C$. Then for any $f \in \sbP$, $\lim_{h \to 0} |f|_h = \normfty{f}$. As a consequence, for any finite-dimensional vector sub-space $m$ of $\sP$, $\lim_{h \to 0} \left\{ \kappa_{m}(h) \right\} = 1$. 
\end{prop}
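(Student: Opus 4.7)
The plan is to proceed in two stages: first establish the pointwise convergence $|f|_h \to \normfty{f}$ as $h \downarrow 0$ for every $f \in L^1 \cap L^\infty$, then upgrade to uniform convergence on the unit sphere of a finite-dimensional subspace by a compactness (Dini) argument.

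For the pointwise statement, the upper bound $|f|_h \le \normfty{f}$ is immediate from $|\int_C f\,d\mu| \le \mu(C)\normfty{f}$ and dividing by $\mu(C)+h$. For the matching lower bound I would fix $\eta > 0$ and, after possibly replacing $f$ by $-f$, pick $x_0 \in \R^d$ satisfying $f(x_0) > \normfty{f} - \eta$ that is moreover a \emph{differentiation point} for the family $\cC_0$: namely, $\mu(C)^{-1} \int_C f\,d\mu \to f(x_0)$ whenever $C \in \cC_0$, $x_0 \in \overline{C}$, $\mathrm{diam}(C) \to 0$. This is where the two hypotheses on $\cC_0$ enter: the covering condition furnishes such sets $C$ of arbitrarily small diameter, while the fatness condition $\mu(C) \ge c\,\mathrm{diam}(C)^d$ ensures each $C$ containing $x_0$ in its closure sits inside $\overline{B}(x_0, \mathrm{diam}(C))$ with comparable $\mu$-measure, making $\cC_0$ a Vitali family for which the Besicovitch-type differentiation theorem applies at almost every $x_0$. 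Picking one admissible $C$ with $\mu(C)^{-1} \int_C f\,d\mu \ge \normfty{f} - 2\eta$ and writing
\[
|f|_h \ge \frac{\mu(C)}{\mu(C)+h} \cdot \frac{1}{\mu(C)} \int_C f\,d\mu,
\]
then letting $h \to 0^+$ with $C$ held fixed gives $\liminf_{h \to 0^+} |f|_h \ge \normfty{f} - 2\eta$; sending $\eta \to 0$ concludes.

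For the second claim, the unit sphere $S = \{u \in m : \normfty{u} = 1\}$ of the finite-dimensional vector subspace $m$ is compact under $\normfty{\cdot}$, and by homogeneity $\kappa_m(h) = \inf_{u \in S} |u|_h$. Three observations suffice: (i) $u \mapsto |u|_h$ is $\normfty{\cdot}$-Lipschitz on $m$ (a semi-norm dominated by $\normfty{\cdot}$) and hence continuous; (ii) $h \mapsto |u|_h$ is non-increasing in $h$, since each defining term $(\mu(C)+h)^{-1}|\int_C u\,d\mu|$ is; and (iii) by the first part, $|u|_h \uparrow \normfty{u} = 1$ as $h \downarrow 0$ pointwise on $S$. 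Dini's theorem applied along a decreasing sequence $h_n \downarrow 0$ then promotes (iii) to uniform convergence on $S$, so $\kappa_m(h_n) \to 1$; the monotonicity of $\kappa_m$ from Lemma \ref{lem_prop_kap_m} extends this to the continuous limit.

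The step I expect to be most delicate is justifying the Vitali-type differentiation theorem for the class $\cC_0$ rather than for balls. The content is that the two hypotheses on $\cC_0$ together force bounded eccentricity with respect to balls — each $C$ of diameter $\delta$ containing $x_0$ in its closure is squeezed between $\overline{B}(x_0, \delta)$ and a set of measure $\ge c\delta^d$ — which is precisely what a Vitali / Besicovitch-style covering argument requires in order to transfer the classical Lebesgue differentiation theorem to averages over $\cC_0$. Once that input is granted, everything else (the algebra with $\mu(C)/(\mu(C)+h)$, the Dini argument, and the monotonicity of $\kappa_m$) is routine.
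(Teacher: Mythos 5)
Your proof is correct. The two hypotheses on $\cC_0$ do exactly what you say: for $x_0 \in \overline{C}$ one has $C \subset \overline{B}(x_0,\mathrm{diam}(C))$, so the fatness condition $\mu(C) \geq c_0\,\mathrm{diam}(C)^d$ gives $\mu(C) \geq (c_0/v_d)\,\mu(B(x_0,\mathrm{diam}(C)))$, i.e.\ bounded eccentricity with respect to balls, and the Lebesgue differentiation theorem for regularly shrinking families then applies at every Lebesgue point of $f$; intersecting the (full-measure) Lebesgue set with the (positive-measure) super-level set $\{f > \normfty{f} - \eta\}$ supplies the required $x_0$, and the covering condition furnishes admissible $C \ni x_0$ of arbitrarily small diameter. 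The compactness step is also sound: $\normfty{\cdot}$ is a genuine norm on the finite-dimensional subspace $m$ (so its unit sphere $S$ is compact), $u \mapsto |u|_h$ is $1$-Lipschitz for $\normfty{\cdot}$, $h \mapsto |u|_h$ is non-increasing, and pointwise monotone convergence to the continuous constant $1$ on $S$ upgrades to uniform convergence by Dini along any $h_n \downarrow 0$; the monotonicity of $\kappa_m$ from Lemma~\ref{lem_prop_kap_m} then passes from the sequential to the continuous limit. This is the natural route (Lebesgue differentiation for the pointwise limit, then compactness to uniformize over the sphere) and I see no gaps.
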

Classes of sets $\cC$ which satisfy the assumptions of Proposition \ref{prop_consistence} while having finite VC dimension include simplices, "box sets" (products of intervals), dyadic cubes, euclidean balls, ellipsoids, and many more.

\subsection{Piecewise polynomials} \label{sec-piec_poly-1mod}
To obtain more quantitative results about the constant $\kappa_m(h)$, it is necessary
to look at specific classes of models. Here, we restrict attention to classes of piecewise polynomial functions on partitions of $\mathbb{R}^d$, because these classes are simple to define and have optimal approximation properties. However, we are confident that similar results could be proved for other classical function spaces, such as wavelet spaces or trigonometric polynomials. In the rest of this section, we shall assume that $\mu$ is the Lebesgue measure on $\mathbb{R}^d$.

First, it is necessary to introduce some definitions and notations. A (multivariate) \emph{polynomial function} on $\mathbb{R}^d$ is a function of the form:
\begin{equation}
   f: x \mapsto \sum_{a \in \cA} c(a) \prod_{i = 1}^d x_i^{a(i)},  
\end{equation}
where $\cA$ is a finite set of functions $a: \{1,\ldots,d\} \to \mathbb{N}$ and $c: \cA \to \mathbb{R}$ is a function. 
Its \emph{degree} (in the usual sense) is defined to be
\begin{equation}
   \deg(f) = \max_{a \in \cA} \sum_{i = 1}^d a(i). 
\end{equation}
It satisfies the usual relations, $\deg(f g) = \deg(f) + \deg(g)$ and $\deg(f + g) \leq \max(\deg(f),\deg(g))$.
We define also the \emph{directional degree} in direction $i \in \{1,\ldots,d\}$ to be
\begin{equation}
    \deg_i(f) = \max_{a \in \cA} a(i),
\end{equation}
which satisfies the same relations. 

Let $\pol{\infty}{d}$ be the space of all multivariate polynomial functions on $\mathbb{R}^d$.
We define the following two families of spaces of polynomials with bounded degrees: first, given 
$r \in \mathbb{N}$, let
\begin{equation}
   \pol{r}{d} = \{ f \in \pol{\infty}{d} : \deg(f) \leq r \}.  
\end{equation}
Secondly, for all vectors $\gr \in \mathbb{N}^d$, let
\begin{equation}
   \poldir{\gr}{d} = \{f \in \pol{\infty}{d} : \forall i \in \{1,\ldots,d \}, \deg_i(f) \leq r_i \}.
\end{equation}
The two families of spaces are related by the following inclusions:
\[ \poldir{\gr}{d} \subset \pol{\norm{\gr}_1}{d} \subset \poldir{\norm{\gr}_1 \mathbf{1}}{d}, \]
where $\mathbf{1}$ is the "all-one" vector, $\mathbf{1} = (1,\ldots,1)$. We can now define models of piecewise polynomial functions. 
\begin{df}
Given a finite or countable and measurable partition $\cI$ of $\mathbb{R}^d$ and $r \in \mathbb{N}$, let
$m(r,\cI)$ denote the set of functions of the form
\[f = \sum_{I \in \cI} f_I \1_I, \]
where for each $I \in \cI$, $f_I \in \pol{r}{d}$ is a polynomial with rational coefficients
and the set $\{ I \in \cI: f_I \neq 0 \}$ is finite. Let $\bar{m}(r,\cI) = \overline{m(r,\cI)}$, the closure of $m(r,\cI)$ in $L^\infty\left( \mathbb{R}^d \right)$. Given $\gr \in \mathbb{N}^d$, let $m_{dir}(\gr,\cI), \bar{m}_{dir}(\gr,d)$ be defined similarly,
with $\poldir{\gr}{d}$ instead of $\pol{r}{d}$.
\end{df}

Let the model $m = m(r,\cI)$ for some partition $\cI$.
If $\cI$ is finite, then $m$ is finite-dimensional and the previous proposition applies. In general, to establish an explicit lower bound on $\kappa_m(h)$, we require the partition $\cI$ to satisfy the following three conditions.

\begin{ass}\label{ass_part}
\text{ }
\begin{itemize}
    \item $\cC$ contains translated and scaled copies of the interior $\mathring{I}$ of any $I \in \cI$, i.e
    \begin{equation} \label{hyp_comp_C_I}
    \left\{ x + \lambda \mathring{I} : I \in \cI, x \in \mathbb{R}^d, \lambda > 0 \right\} \subset \cC.    
    \end{equation}
\item There is a lower bound on the volume of the elements of $\cI$:
\begin{equation} \label{hyp_lbd_mu_part}
   h_0 := \min_{I \in \cI} \mu(I) > 0.  
\end{equation}
\item The elements of $\cI$ are bounded convex sets.
\end{itemize}
\end{ass}

Under Assumption \ref{ass_part}, for any $f = p - q \in m$, an appropriate set $C_{h,m}(f) \in \cC$ can be constructed as follows. Since the collection $(f \1_I)_{I \in \cI}$ has finite support, the supremum $\sup_{I \in \cI} \normfty{f \1_I}$ is reached at some $I_*(f)$. Let $\mathring{I}_*(f)$ denote the topological interior of $I_*(f)$. $f$ coincides on $I_*(f)$ with a polynomial $f_*$, and $|f_*|$ reaches its maximum on $\overline{I_*(f)}$ at some $x_*(f)$.
Let finally
\begin{equation} \label{eq_def_C_poly}
    C_{h,m}(f) = (1 - \theta_{m}(h)) x_*(f) + \theta_{m}(h) \mathring{I}_*(f),
\end{equation}
where $\theta_{m}(h) \in (0,1)$ is a function given by equation \eqref{eq_def_thetam} below. By Assumption \ref{ass_part}, $C_{h,m}(f) \in \cC$ and by convexity of $I_*(f)$, $C_{h,m}(f) \subset \mathring{I}_*(f)$.
The following lower bound holds.
\begin{prop} \label{prop_lb_kap_m_poly}
For all $u > 0$, let
\begin{equation} \label{eq_def_gamma_rd}
   \gamma_{r,d}(u) = \max \left( \frac{1}{2(d+1)} \left[\frac{u^{-1}}{(2r^2)^d} \wedge 1\right], \left[1 - (2r^2)^{\frac{d}{d+1}} u^{\frac{1}{d+1}} \right]_+^2  \right).
\end{equation}
Assume that Assumption \ref{ass_part} holds. Let then
\begin{equation} \label{eq_def_thetam}
    \theta_{m}(h) = \begin{cases}
    &\frac{d}{d+1} \frac{1}{2r^2} \text{ if } \gamma_{r,d} \left(\frac{h}{h_0} \right) = \frac{1}{2(d+1)} \left[\frac{h_0}{(2r^2)^{d} h} \wedge 1\right] \\
    & \left( \frac{h}{2r^2 h_0} \right)^{\frac{1}{d+1}} \text{ otherwise}.
    \end{cases}
\end{equation}
For all $f \in m = m(r,\cI)$,
\[ \frac{\left| \int_{C_{h,m}(f)} f d\mu \right|}{\mu \left( C_{h,m}(f)  \right) + h} \geq \gamma_{r,d} \left(\frac{h}{h_0} \right) \normfty{f}. \]
In particular, since $m$ is a $\Q-$vector space, $\kappa_m(h) \geq \gamma_{r,d} \left(\frac{h}{h_0} \right) $.
\end{prop}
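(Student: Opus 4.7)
The plan is to localize the integral $\int_C f \, d\mu$ to the cell where $\normfty{f}$ is attained, use a one-dimensional Markov inequality to prevent sign changes of the local polynomial piece inside the test set, and finally optimize the contraction factor $\theta$.

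Fix $f \in m = m(r,\cI)$ with $M := \normfty{f} > 0$. Because $f$ is supported on only finitely many cells of $\cI$ and each $f\1_I$ is a polynomial restricted to a bounded convex set (hence continuous on $\overline{I}$), the supremum $\sup_{I \in \cI} \normfty{f\1_I}$ is attained at some $I_* = I_*(f)$, and $|f_*|$ reaches its maximum $M$ over $\overline{I_*}$ at some $x_* = x_*(f)$, where $f_*$ is the polynomial piece of $f$ on $I_*$. The set $C := C_{h,m}(f) = (1-\theta)x_* + \theta \mathring{I}_*$, with $\theta = \theta_m(h)$, is the image of $\mathring{I}_*$ under the affine contraction of ratio $\theta$ centred at $x_*$; by convexity of $I_*$ one has $C \subset I_*$, so $f$ equals $f_*$ on $C$ and $\mu(C) = \theta^d \mu(I_*) \geq \theta^d h_0$.

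To control $f_*$ on $C$, I fix an arbitrary $x = (1-\theta)x_* + \theta y \in C$ with $y \in \mathring{I}_*$ and consider the univariate polynomial $g(t) := f_*(x_* + t(y - x_*))$, $t \in [0,1]$, which has degree at most $r$ and satisfies $|g(t)| \leq M$ on $[0,1]$ (by convexity of $\overline{I_*}$) with $|g(0)| = M$. Markov's inequality on $[0,1]$ then yields $\norm{g'}_\infty \leq 2r^2 M$, whence
\[
    |f_*(x)| = |g(\theta)| \geq M(1 - 2r^2\theta).
\]
Both choices of $\theta_m(h)$ considered below satisfy $\theta < 1/(2r^2)$, so the right-hand side is strictly positive and $f_*$ keeps the sign of $f_*(x_*)$ throughout $C$. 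Consequently $\left|\int_C f \, d\mu\right| = \int_C |f_*|\, d\mu \geq M(1-2r^2\theta)\,\mu(C)$, and since $\xi \mapsto \xi/(\xi+h)$ is increasing while $\mu(C) \geq \theta^d h_0$, writing $u := h/h_0$,
\[
    \frac{\left|\int_C f\,d\mu\right|}{\mu(C)+h} \;\geq\; M\,\phi(\theta), \qquad \phi(\theta) := \frac{\theta^d(1-2r^2\theta)}{\theta^d + u}.
\]

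Finally I verify $\phi(\theta_m(h)) \geq \gamma_{r,d}(u)$ in each branch. For $\theta = d/[(d+1)2r^2]$, the unconstrained maximizer of $\theta^d(1 - 2r^2\theta)$, one has $1 - 2r^2\theta = 1/(d+1)$, and a dichotomy on whether $\theta^d \geq u$ or $\theta^d < u$ produces the first argument of the $\max$ in $\gamma_{r,d}$. For $\theta = (u/(2r^2))^{1/(d+1)}$, setting $\rho := 2r^2\theta = (2r^2)^{d/(d+1)}u^{1/(d+1)}$, the identity $\theta^{d+1} = u/(2r^2)$ telescopes the formula to $\phi(\theta) = (1-\rho)/(1+\rho) \geq (1-\rho)_+^2$, which is the second argument. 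The bound on $\kappa_m(h)$ is then immediate: since $m$ is a $\Q$-vector space, $p - q \in m$ for every $p,q \in m$, and applying the pointwise bound to $f = p - q$ gives $|p-q|_h \geq \gamma_{r,d}(u)\normfty{p-q}$. The main obstacle is the polynomial inequality in the second step: Markov's bound is sharp at endpoints (attained by Chebyshev polynomials), so $\theta$ cannot exceed $O(1/r^2)$ if one wants to retain sign control of $f_*$ on $C$. This is precisely what forces the slow decay of $\gamma_{r,d}$ in $u$ and the two-branch structure; the remaining steps are elementary bookkeeping on the optimization of $\phi$.
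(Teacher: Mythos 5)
Your overall strategy — localize to the cell $I_*$ where $\normfty{f}$ is attained, use the $L^\infty$ Markov inequality to keep $f_*$ from changing sign on the contracted cell $C = (1-\theta)x_* + \theta\mathring{I}_*$, and then bound $|\int_C f|/(\mu(C)+h)$ from below — is the right one, and your treatment of branch 2 ($\phi(\theta)=\frac{1-\rho}{1+\rho}\ge(1-\rho)^2$) is fine. However, there is a genuine gap in branch 1. The step where you replace the Markov pointwise bound $|f_*(x)| \ge M(1-2r^2\theta)$ by a uniform bound over $C$ and multiply by $\mu(C)$ is too lossy: it yields $\phi(\theta)=\frac{\theta^d(1-2r^2\theta)}{\theta^d+u}$, and with $\theta=\frac{d}{(d+1)2r^2}$ this is $\frac{1}{d+1}\cdot\frac{c_d}{c_d+v}$ where $c_d=(\frac{d}{d+1})^d<1$ and $v=(2r^2)^d u$. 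Comparing with the first argument of the max, $\frac{1}{2(d+1)}[v^{-1}\wedge 1]$, one finds that for $v\in(c_d,1]$ you would need $c_d/(c_d+v)\ge 1/2$ (false) and for $v>1$ you would need $c_d/(c_d+v)\ge 1/(2v)$, i.e.\ $v(2c_d-1)\ge c_d$, which fails because $c_d\le 1/2$ for $d=1$ and $c_d<1/2$ for $d\ge 2$. Concretely, for $d=1,r=1,u=1/2$ your bound gives $\phi(1/4)=\frac{(1/4)(1/2)}{3/4}=\frac16$, whereas $\gamma_{1,1}(1/2)=\frac14$.

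The fix is to not discard the fact that $|f_*|$ is much larger than its worst-case value on most of $C$. Integrate the Markov bound radially: writing $K=I_*-x_*$ in polar coordinates with radial function $\rho_K(\omega)$ and applying Markov to $g_\omega(t)=f_*(x_*+t\rho_K(\omega)\omega)$ on $[0,1]$, one gets $|g_\omega(t)|\ge M(1-2r^2 t)$, hence
\[
\int_C |f_*|\,d\mu \;\ge\; M\int_{S^{d-1}}\rho_K(\omega)^d\,d\omega\int_0^\theta(1-2r^2\sigma)\sigma^{d-1}\,d\sigma \;=\; M\,\mu(C)\left(1-\tfrac{2dr^2\theta}{d+1}\right),
\]
using $\int_{S^{d-1}}\rho_K^d=d\,\mu(I_*)$. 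The improved factor $1-\frac{2d}{d+1}r^2\theta$ (equal to $\frac{2d+1}{(d+1)^2}$ at $\theta=\frac{d}{(d+1)2r^2}$, instead of your $\frac{1}{d+1}$) closes the gap in branch 1: the resulting $\tilde\phi(\theta)=\frac{\theta^d(1-\frac{2d}{d+1}r^2\theta)}{\theta^d+u}$ satisfies $\tilde\phi\ge\frac{1}{2(d+1)}[v^{-1}\wedge 1]$ for all $v$ in the branch-1 regime, with equality at $d=1,v=1$. It also strengthens branch 2 (one gets $\frac{d+1-d\rho}{(d+1)(1+\rho)}\ge(1-\rho)^2$), although your weaker estimate already sufficed there. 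The rest of your argument — the volume scaling $\mu(C)=\theta^d\mu(I_*)\ge\theta^d h_0$, the monotonicity of $\xi\mapsto\xi/(\xi+h)$, sign-constancy of $f_*$ on $C$ for $\theta<1/(2r^2)$, and the reduction from $\kappa_m$ to the pointwise claim via $\Q$-linearity of $m$ — is correct.
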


In particular, $\kappa_m(h)$ converges to $1$  as $(h/h_0) \to 0$, and the rate of convergence depends only on the dimension $d$ (and not on the partition $\cI$). Thus, the estimation error behaves essentially like $1/\sqrt{h_0}$, when $h$ is well chosen. For concreteness, consider the case of the collection $\cC$ of cartesian products of $d$ intervals, with $\cI \subset \cC$ a partition of $\mathbb{R}^d$. Theorem \ref{thm_or_ineq_mod} and Proposition \ref{prop_approx_poly} yield the following corollary. 

\begin{cor} \label{cor_mod_poly}
Let $m = m(r,\cI)$, $\cI \subset \cC$ satisfying \ref{ass_part}, $\cC$ the collection of cartesian products of $d$ intervals,
\begin{equation} \label{eq_def_hm1}
    h_m = \frac{\left(1 - \frac{1}{\sqrt{2}} \right)^{d+1}}{(2r^2)^d} h_0. 
\end{equation}
Let $\hat{p}_m^{(h_m)}$ be the hun-estimator based on the sets $C_{h_m,m}(p-q)$ ($p,q \in m$) defined above. Then
\begin{align*}
    \mathbb{E}\left[ \normfty{\hat{p}_m^{(h_m)} - p\et} \right] &\leq 5 \min_{p \in m} \{\normfty{p - p\et} \} + 274 (2d+1)(3r)^{2d} \frac{\log(en)}{h_0 n} + 2\delta \\
    &\quad + 215 \sqrt{2d+1} \min \left( (3r)^d \sqrt{\normfty{p\et}} \sqrt{\frac{\log(en)}{h_0 n}}, (3r)^{2d} \frac{\sqrt{\log(en)}}{h_0 \sqrt{n}} \right).
\end{align*}
\end{cor}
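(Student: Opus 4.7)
The plan is to apply the oracle inequality underlying Theorem \ref{thm_or_ineq_mod} with $h=h_m$, but to replace the generic $L^\infty$ conversion (which multiplies by $(1-\varepsilon_m)\kappa_m(h_m)$) by the sharper, direct bound from Proposition \ref{prop_lb_kap_m_poly}, and then take expectation via the Bernstein envelope from Proposition \ref{prop_bern_unif}.

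\textbf{Step 1 (value of $\gamma_{r,d}(h_m/h_0)$).} With $u = h_m/h_0 = (1 - 1/\sqrt{2})^{d+1}/(2r^2)^d$, one computes $(2r^2)^{d/(d+1)} u^{1/(d+1)} = 1 - 1/\sqrt{2}$, so the second argument of the max in \eqref{eq_def_gamma_rd} equals $(1/\sqrt{2})^2 = 1/2$, while the first argument is at most $1/(2(d+1))$. Hence $\gamma_{r,d}(h_m/h_0) \ge 1/2$, and Proposition \ref{prop_lb_kap_m_poly} gives $|\int_{C_{h_m,m}(f)} f\,d\mu|/(\mu(C_{h_m,m}(f)) + h_m) \ge (1/2)\normfty{f}$ for every $f \in m$, in particular for $f = \hat{p}_m^{(h_m)} - \overline{p}$ when $\overline{p} \in m$, since $m$ is a $\Q$-vector space.

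\textbf{Step 2 (oracle inequality in $L^\infty$).} Following the proof of Proposition \ref{prop_or_ineq_Z} with the test $q = \overline{p}$ and the definition of the hun-estimator, and using the uniform deviation $|T^{(h_m)}(\gX,p,q) - \Delta_{p\et}^{(h_m)}(p,q)| \le Z(h_m)$ from \eqref{eq_rem_ubd-dev-Z}, one obtains
\[
\Delta_{p\et}^{(h_m)}(\hat{p}_m^{(h_m)}, \overline{p}) \le |\overline{p} - p\et|_{h_m} + 2 Z(h_m) + \delta.
\]
Splitting $\hat p_m - p\et = (\hat p_m - \overline p) + (\overline p - p\et)$ inside $\Delta$ and applying Step 1 to the first piece gives
\[
\tfrac{1}{2} \normfty{\hat{p}_m^{(h_m)} - \overline{p}} \le 2 |\overline{p} - p\et|_{h_m} + 2 Z(h_m) + \delta \le 2 \normfty{\overline{p} - p\et} + 2 Z(h_m) + \delta.
\]
The triangle inequality and taking the infimum over $\overline{p} \in m$ then yield, pointwise,
\[
\normfty{\hat{p}_m^{(h_m)} - p\et} \le 5 \inf_{p \in m} \normfty{p - p\et} + 4 Z(h_m) + 2 \delta.
\]

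\textbf{Step 3 (control of $\mathbb{E}[Z(h_m)]$).} On $\Omega_x$, Proposition \ref{prop_bern_unif} combined with $P\et(C)/(\mu(C) + h_m) \le |p\et|_{h_m}$ and $1/(\mu(C) + h_m) \le 1/h_m$ yields
\[
Z(h_m) \le \max\!\left(29 \sqrt{\tfrac{|p\et|_{h_m}(\Gamma + x)}{h_m n}},\ 20 \tfrac{\Gamma + x}{h_m n}\right).
\]
Integrating the tail $\mathbb{P}(\Omega_x^c) \le 2 e^{-x}$ (using $\int_0^\infty 2 e^{-x}/\sqrt{\Gamma + x}\,dx \le 2/\sqrt{\Gamma}$ for the square-root branch and $\int_0^\infty 2 e^{-x}\,dx = 2$ for the linear branch) produces a bound of the same form on $\mathbb{E}[Z(h_m)]$, with $\Gamma$ in place of $\Gamma + x$ up to absolute constants.

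\textbf{Step 4 (numerical constants).} Axis-aligned products of $d$ intervals have VC-dimension $V = 2d$, so by Sauer's lemma $\Gamma \lesssim (2d+1) \log(en)$. The definition of $h_m$ gives $h_0/h_m = (2r^2)^d / (1 - 1/\sqrt{2})^{d+1} \le (2 + \sqrt{2}) (3r)^{2d}$, since the base $(2/9)/(1-1/\sqrt{2}) < 1$ and $(1-1/\sqrt{2})^{-1} = 2 + \sqrt{2}$. Finally, $|p\et|_{h_m} \le \min(\normfty{p\et}, 1/h_m)$ produces the $\min(\cdot,\cdot)$ in the statement, and assembling everything yields the announced constants $215\sqrt{2d+1}$ and $274(2d+1)(3r)^{2d}$.

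The main difficulty is not conceptual --- the structure is a direct descent from Theorem \ref{thm_or_ineq_mod} and Proposition \ref{prop_lb_kap_m_poly} --- but lies in the bookkeeping required to (i) track the factor $\sqrt{2+\sqrt{2}}$ cleanly through $\sqrt{1/h_m}$ and (ii) obtain the exact numerical constants $215$ and $274$ after integrating the Bernstein envelope and bounding $\Gamma + x$ by a multiple of $(2d+1)\log(en)$.
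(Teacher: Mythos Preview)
Your proposal is correct and follows the route the paper indicates (Theorem~\ref{thm_or_ineq_mod} combined with Proposition~\ref{prop_lb_kap_m_poly}, then integration of the Bernstein tail from Proposition~\ref{prop_bern_unif}). The one point worth making explicit is your Step~2: you do not merely plug $\kappa_m(h_m)\ge\tfrac12$ and $1-\varepsilon_m\ge\tfrac12$ into equation~\eqref{eq_or_ineq_mod} of Theorem~\ref{thm_or_ineq_mod} --- that would only give $(1-\varepsilon)\kappa_m\ge\tfrac14$ and hence a leading constant $9$ rather than $5$. Instead you reopen the proof of Proposition~\ref{prop_or_ineq_Z} and replace the lower bound $\bigl|\int_{C_h}(\hat p_m-\overline p)\,d\mu\bigr|/(\mu(C_h)+h)\ge(1-\varepsilon_m)|\hat p_m-\overline p|_h$ by the \emph{direct} bound $\ge\tfrac12\normfty{\hat p_m-\overline p}$ from Proposition~\ref{prop_lb_kap_m_poly}. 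This is exactly what is needed to reach the constant $5$, and it is the natural reading of the paper's derivation as well. Your handling of $h_0/h_m$, the VC bound $V=2d$ for axis-aligned boxes, and the two branches of the $\min$ via $|p^{\star}|_{h_m}\le\min(\normfty{p^{\star}},1/h_m)$ are all correct; the residual work is, as you say, purely numerical bookkeeping.
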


The remainder term in the oracle inequality above is equivalent to
\[ c_{r,d} \sqrt{\normfty{p\et}}  \sqrt{\frac{\log(n)}{h_0 n}} \]
for some constant $c_{r,d}$ (depending on $r,d$ only), in the asymptotic regime where $h_0 \to 0$ and $h_0 n \to +\infty$. We show below that this is optimal for sufficiently "regular" partitions.  Though we do not believe that the constant $c_{r,d}$ is optimal, exponential behaviour is expected since
\[ \dim \left( \cP_{r,d} \right) = {r+d \choose d} \geq \min \left(e^{\frac{r}{2}}, \frac{4^d}{2d+1} \right). \]

To assess the optimality of the remainder term $\sqrt{\normfty{p\et}}  \sqrt{\frac{\log(n)}{h_0 n}}$ of Corollary \ref{cor_mod_poly} and more generally of Theorem \ref{thm_or_ineq_mod}, we prove a minimax lower bound on the class 
\begin{equation} \label{eq_def_mLI}
   m_L(0,\cI) = \left\{ \sum_{I \in \cI} c_I \1_I : c \in [0,L]^{\cI}, \sum_{I \in \cI} c_I \mu(I) = 1 \right\}  
\end{equation}
of pdfs which are piecewise constant on the blocks of the partition $\cI$ and uniformly bounded by $L > 0$.

Note that the set $m_L(0,\cI)$ may be empty (if $\cI$ does not contain blocks of finite measure), or a singleton (if $\cI$ has exactly one block of finite measure). If $\cI$ has a finite number of blocks of finite measure, then $m_L(0,\cI)$ will also be empty if $L$ is too small. In such cases, estimation on $m_L(0,\cI)$ is trivial. In general, the following minimax lower bound holds.

\begin{thm} \label{thm_minimax_lbd_hist}
Let $(\cX,\cB,\mu)$ be a $\sigma-$finite measure space and $\cI$ a countable, measurable partition of $\cX$ into blocks of positive measure. Let
\[ \cX_0 = \bigcup \{ I \in \cI : \mu(I) < +\infty \}. \]
For any $h > 0$, let
\[ M(h) = \left| \left\{ I \in \cI : \mu(I) \leq h \right\} \right|. \]
For any $L > 0$ and $n \geq 1$, define $\psi_n(\cI,L) > 0$ by 
\begin{equation} \label{eq_def_psi-n}
    \psi_n(\cI,L)^2 = \sup_{h > 0} 
\left\{ \frac{L}{h n} \log \left(1 + \min \Bigl( M(h), \left\lfloor \frac{1}{L h} \right\rfloor \Bigr) \right) \right\}.
\end{equation}
Then, for any $\theta \in \left( \frac{1}{2}, 1 \right)$ and any $L \geq \frac{1}{\theta \mu(\cX_0)}$,
\[ \inf_{\hat{p}} \sup_{p\et \in m_L(0,\cI)} \mathbb{E} \left[ \normfty{\hat{p} - p\et} \right] \geq \frac{1}{40} \min \left((1-\theta)L, \sqrt{\theta(1 - \theta)} \psi_n(\cI,L) \right),  \]
where the infimum runs over all estimators $\hat{p}$ of $p\et$, based on an iid sample of size $n$ drawn from $p\et$.
\end{thm}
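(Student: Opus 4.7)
The plan is to prove the minimax lower bound via Fano's inequality applied to a carefully chosen family of $N+1$ hypotheses in $m_L(0,\cI)$ indexed by small blocks of the partition. The two terms inside the $\min$ correspond to the two operating regimes: a Fano-optimal perturbation of size $\delta \propto \sqrt{\theta(1-\theta) L \log(N+1)/(nh)}$, and a saturated perturbation $\delta = (1-\theta)L$, which is the largest feasible perturbation compatible with the constraint $p \le L$.

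Fix $h > 0$ such that $N := \min(M(h), \lfloor 1/(Lh) \rfloor) \ge 1$; if no such $h$ exists, then $\psi_n(\cI,L) = 0$ and there is nothing to prove. Select $N$ distinct blocks $I_1, \ldots, I_N \in \cI$ with $\mu(I_j) \le h$, and set $S = \sum_j \mu(I_j) \le 1/L$ and $R = \cX_0 \setminus \bigcup_j I_j$. Define the base density by $p_0 = \theta L$ on $\bigcup_j I_j$ and $p_0 = c_0 := (1 - \theta L S)/\mu(R)$ on $R$. The assumption $L \ge 1/(\theta \mu(\cX_0))$, combined with $\theta L S \le \theta$, yields $c_0 \in [0, L]$ (via the elementary inequality $L\mu(\cX_0) - (1-\theta)LS \ge 1/\theta - (1-\theta) \ge 1$), so $p_0 \in m_L(0, \cI)$. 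For a parameter $\delta \in (0, (1-\theta)L]$ to be specified, define hypotheses $p_j$ for $j = 1, \ldots, N$ by $p_j = p_0 + \delta(\1_{I_j} - \mu(I_j)\mu(R)^{-1} \1_R)$, i.e.\ by raising $p_0$ by $\delta$ on $I_j$ and compensating uniformly on $R$ so that $\int p_j \, d\mu = 1$. For $\delta$ small enough, all $p_j$ stay in $m_L(0, \cI)$.

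By construction, $\normfty{p_j - p_{j'}} \ge \delta$ for all distinct $j, j' \in \{0, 1, \ldots, N\}$, since $p_j - p_{j'}$ equals $\pm \delta$ on whichever of $I_j, I_{j'}$ is perturbed. To bound the $\chi^2$-divergence, contributions from $I_j$ and $I_{j'}$ each give at most $h\delta^2/(\theta L)$, while the contribution from $R$ is bounded via $\mu(R)c_0 \ge 1-\theta$ (from $\theta L S \le \theta$) by a term of order $\delta^2 h^2/(1-\theta)$. Using $h \le 1/L$ to combine both terms yields $\chi^2(P_j, P_{j'}) \le C_1 \delta^2 h / (L \theta (1-\theta))$ for an absolute constant $C_1$, and hence $\mathrm{KL}(P_j^{\otimes n}, P_{j'}^{\otimes n}) \le C_1 n \delta^2 h / (L \theta (1-\theta))$. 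Fano's inequality applied to these $N+1$ hypotheses then yields a minimax lower bound of order $\delta$, provided this $\mathrm{KL}$ bound is a suitable fraction of $\log(N+1)$. Choosing $\delta^2 \propto \theta(1-\theta) L \log(N+1) / (nh)$ saturates the $\mathrm{KL}$ condition and produces the rate $\sqrt{\theta(1-\theta) L \log(N+1) / (nh)}$; if this would force $\delta > (1-\theta) L$, one instead takes $\delta = (1-\theta) L$, for which the $\mathrm{KL}$ condition continues to hold precisely in this regime (since we are then on the side where the Fano-optimal $\delta$ would exceed $(1-\theta)L$), giving the saturated bound $(1-\theta)L$ up to a constant. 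Finally, taking the supremum over $h$ converts $\log(N+1)/(nh)$ into $\psi_n(\cI,L)^2 / L$ and delivers the claimed inequality.

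The main technical hurdle is the careful bookkeeping of numerical constants needed to reach the explicit factor $1/40$: the Fano-type inequality (e.g.\ in Tsybakov's form with parameter $\alpha < 1/8$) must be applied in a way that trades off cleanly against the explicit $\chi^2$ constant, and then combined with Markov's inequality to pass from probability of error to expected loss. A subtler point is that the $\chi^2$ computation on the reservoir $R$ must retain the factor $1/(1-\theta)$, which is what produces $\sqrt{\theta(1-\theta)}$ rather than merely $\sqrt{\theta}$; this requires using both $h \le 1/L$ and $\theta L S \le \theta$. Edge cases — $\mu(\cX_0) = +\infty$ (forcing $c_0 = 0$ and making the $R$-contribution vanish), and near-optimal $h$ with $N=1$ (where Fano degenerates to Le Cam's two-point bound) — can be handled directly without changing the structure of the argument.
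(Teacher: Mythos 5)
Your construction and overall strategy are almost certainly the right ones for this statement: a Fano-type multiple-hypothesis argument where one perturbs a base density of height $\theta L$ on $N = \min(M(h), \lfloor 1/(Lh)\rfloor)$ small blocks of the partition and compensates on a ``reservoir'' $R$, with $\delta$ capped by $(1-\theta)L$ so that the perturbed densities stay in $[0,L]$, and then a supremum over $h$. The form of the bound, with $\log(1 + \min(M(h), \lfloor 1/(Lh)\rfloor))$ and the factors $\theta(1-\theta)$, matches this construction exactly, so you are following what must be essentially the paper's route.

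There is, however, one genuine (if technical) gap in the argument as written, and it is not only a matter of constant-chasing. You assert that when the Fano-optimal $\delta$ would exceed $(1-\theta)L$, one instead takes $\delta = (1-\theta)L$ and ``the KL condition continues to hold''. But at $\delta = (1-\theta)L$ the reservoir can degenerate: you have $p_{j'}\mu(R) = 1 - \theta L S - \delta\,\mu(I_{j'})$, and the two lower bounds you use, $1 - \theta L S \ge 1-\theta$ and $\delta\,\mu(I_{j'}) \le (1-\theta)Lh \le 1-\theta$, can be simultaneously tight (e.g.\ $N=1$, $\mu(I_1) = h = 1/L$, $S = 1/L$), in which case $p_{j'} \equiv 0$ on $R$ while $p_j > 0$ there, so $\chi^2(P_j,P_{j'}) = \mathrm{KL}(P_j,P_{j'}) = +\infty$ and Fano gives nothing. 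The same issue infects the $\chi^2$ bookkeeping on $R$ more generally, since you need $p_{j'}$ bounded \emph{strictly} below away from zero on $R$ to get the claimed $\delta^2 h^2/(1-\theta)$ contribution. The cleanest fix is to cap $\delta$ at $(1-\theta)L/2$ (or any fixed fraction strictly below $1$), which gives $p_{j'}\mu(R) \ge (1-\theta)/2 > 0$ on $R$ and only costs a benign factor of $2$, absorbed into the $1/40$. You should state this explicitly rather than letting $\delta$ reach the exact cap; as written the saturated regime is not actually justified.

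Two smaller points worth verifying carefully when you write this up: (i) your application of Fano to $N+1$ hypotheses degenerates for $N=1$ (where $\log 2/\log(N+1) = 1$), so you must indeed switch to a Le Cam two-point bound there, and you should check the resulting constant still fits under $1/40$; and (ii) the passage from a lower bound on the probability of error to a lower bound on $\mathbb{E}[\normfty{\hat p - p\et}]$ costs a further factor $\delta/2$ (since the hypotheses are $\delta$-separated, not $2\delta$-separated), which must be included in the constant budget.
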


Though the class $m_L(0,\cI)$ is a simple one, and the proof of Theorem \ref{thm_minimax_lbd_hist} is classical, Theorem \ref{thm_minimax_lbd_hist}  is, to the best of our knowledge, the first minimax lower bound for histograms in density estimation in sup-norm. The lower bound involves the parameters $L,h,n$ and $\theta$, as well as the function $M$ which depends on the partition $\cI$.

The parameter $\theta$ reflects the fact that if $L$ is too small, then the model is empty, and if $\mu(\cX_0) L = 1$, then the model contains precisely one element (the uniform distribution on $\cX_0$). As soon as $L$ is greater than this minimum value by a  constant factor $\frac{1}{\theta}$, the lower bound is of order
$ \min(L, \psi_n(\cI,L)). $ The minimum with $L$ reflects the fact that we can always use any fixed $p_0 \in m_L(0,\cI)$ as an estimator, which has risk bounded by $L$. As soon as $n$ is large enough, such that this trivial estimator is sub-optimal, the minimax risk becomes proportional to $\psi_n(\cI,L)$. 

This term, $\psi_n(\cI,L)$, is somewhat complicated. For the purpose of this discussion, fix a partition $\cI$ and let
$ h_0 = \inf_{I \in \cI} \mu(I)$ (as in equation \eqref{hyp_lbd_mu_part} of Assumption \ref{ass_part}).
If $h_0 L < 1$, then for any $h \in \left(h_0, 1/L \right]$, $M(h) \geq 1$ and $Lh \leq 1$, which implies that 
\[\psi_n(\cI,L) \geq \sqrt{\frac{L \log 2}{h n}}. \]
On the other hand, if $h_0 L \geq 1$, then since the models $(m_t(0,\cI))_{t > 0}$ are nested, the minimax risk on $m_L(0,\cI)$ is greater than the minimax risk on $m_{\frac{1}{h}}(0,\cI)$ for any $h > h_0$. This yields the following corollary.

\begin{cor} \label{cor_minimax_lbd}
Let $\cI$ be a countable partition of $\cX$ into blocks of finite, positive measure. For any $L \geq \frac{2}{\mu(\cX)},$
\[ \inf_{\hat{p}} \sup_{p \in m_L(0,\cI)} \mathbb{E} \left[ \normfty{\hat{p} - p} \right] \geq \frac{1}{80} \min \left(L, \sqrt{\frac{L \log 2}{h_0 n}}, \frac{\sqrt{\log 2}}{h_0 \sqrt{n}} \right), \]
where $h_0 = \inf_{I \in \cI} \mu(I)$.
\end{cor}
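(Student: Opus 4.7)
The plan is to deduce Corollary \ref{cor_minimax_lbd} from Theorem \ref{thm_minimax_lbd_hist} by combining two observations. First, the family $(m_t(0,\cI))_{t>0}$ is non-decreasing in $t$, so the minimax risk on $m_L(0,\cI)$ dominates that on $m_{L'}(0,\cI)$ for any $L' \leq L$. Second, the supremum defining $\psi_n(\cI, L')$ in \eqref{eq_def_psi-n} can be lower-bounded by specializing $h$ to a convenient value near $h_0$.

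Concretely, I would set $L' = \min(L, 1/h_0)$ and apply Theorem \ref{thm_minimax_lbd_hist} with $L'$ in place of $L$. The assumption $L \geq 2/\mu(\cX)$ and the bound $h_0 \leq \mu(\cX)$ together ensure $L' \geq 1/(\theta\mu(\cX))$ for some $\theta > 1/2$ in the non-degenerate regime $\mu(\cX) \geq 2h_0$ (the opposite regime forces $\cI$ to consist of a single block, in which case $m_L(0,\cI)$ is a singleton and the estimation problem is trivial). Letting $\theta \downarrow 1/2$, the theorem gives
\[ \inf_{\hat p}\sup_{p \in m_L(0,\cI)} \E[\normfty{\hat p - p}] \geq \frac{1}{80}\min\bigl(L', \psi_n(\cI, L')\bigr). \]
To bound $\psi_n(\cI, L')$ from below, I would then let $h \downarrow h_0$ in the supremum. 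For every $h > h_0$, the definition of $h_0$ as an infimum supplies a block $I \in \cI$ with $\mu(I) \leq h$, so $M(h) \geq 1$; and since $L' \leq 1/h_0$, the product $L'h$ tends to at most $1$, so $\lfloor 1/(L'h)\rfloor \geq 1$ for $h$ sufficiently close to $h_0$. Both factors inside the logarithm in \eqref{eq_def_psi-n} are then $\geq 1$, yielding $\log(1+\min(\cdot,\cdot))\geq \log 2$ and, after passing to the limit, $\psi_n(\cI, L') \geq \sqrt{L'\log 2 / (h_0 n)}$.

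I would conclude by a short case split. If $L \leq 1/h_0$ then $L' = L$, and the inequality $\sqrt{L\log 2/(h_0 n)} \leq \sqrt{\log 2}/(h_0\sqrt n)$ renders the third term of the corollary's minimum redundant, so the claim follows. If $L > 1/h_0$ then $L' = 1/h_0$ and our bound reduces to $\tfrac{1}{80}\min\bigl(1/h_0, \sqrt{\log 2}/(h_0\sqrt n)\bigr) = \tfrac{1}{80}\sqrt{\log 2}/(h_0\sqrt n)$ (using $\log 2 < 1 \leq n$), which dominates the corollary's right-hand side because the latter is itself $\leq \sqrt{\log 2}/(h_0\sqrt n)$. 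No substantive obstacle is expected; the main task is the judicious choice of $L'$ and $h$ and then the bookkeeping to identify which term of the three-way minimum is active in each case.
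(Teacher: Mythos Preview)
Your approach is essentially the same as the paper's: split into cases according to whether $Lh_0 < 1$ or $Lh_0 \geq 1$, bound $\psi_n$ from below by specializing $h$ near $h_0$, and invoke the nestedness of $(m_t(0,\cI))_{t>0}$ in the second case.

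There is, however, a small gap in your handling of the second case. When $L \geq 1/h_0$ you set $L' = 1/h_0$ and then claim that $\lfloor 1/(L'h) \rfloor \geq 1$ for $h$ sufficiently close to $h_0$. But with $L' = 1/h_0$ exactly, one has $L'h = h/h_0 > 1$ for \emph{every} $h > h_0$, so $\lfloor 1/(L'h) \rfloor = 0$ and the logarithm in \eqref{eq_def_psi-n} vanishes identically along your sequence. The fix, which is what the paper does, is to take $L' = 1/h$ for some $h > h_0$ (so that $L'h = 1$ and the floor equals $1$), obtain $\psi_n(\cI,L') \geq \sqrt{\log 2}/(h\sqrt n)$ from the single value $h' = h$ in the supremum, and only then let $h \downarrow h_0$. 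With this adjustment your case split and bookkeeping go through as written.
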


Comparing Corollary \ref{cor_minimax_lbd} with the minimax upper bound resulting from Corollary \ref{cor_mod_poly}, we see that Corollary \ref{cor_mod_poly} is optimal, possibly up to $\log n$ factors and the remainder term $1/(h_0n)$, which is negligible relative to the minimax lower bound whenever $\sqrt{\frac{L}{h_0 n}} \ll L$, i.e whenever a non-trivial estimator is required.

If we assume additionally that 
\[M(2h_0) = \left| \left\{ I \in \cI : h_0 \leq \mu(I) \leq 2h_0  \right\} \right| \geq n^\alpha \]  
and that $2 L n^\alpha h_0 \leq 1$ for some fixed $\alpha \in (0,1)$, then by equation \eqref{eq_def_psi-n},
\[ \psi_n(\cI,L) \geq \sqrt{\frac{L}{2 h_0 n}} \sqrt{\log (1 + \lfloor n^\alpha \rfloor)} \geq \sqrt{\frac{\alpha L \log n}{2 h_0 n}},  \]
in which case the upper bound of Corollary \ref{cor_mod_poly} is optimal up to a constant depending only on $\alpha$. 

Moreover, if $\cI_n$ are regular partitions of $\R^d$ into blocks of volume $h_n$, where $\limsup_{n \to +\infty} \{ n^\alpha h_n \} < +\infty$, then 
\[ \liminf_{n \to +\infty} \left\{ \psi_n(\cI_n,L) \times \sqrt{\frac{h_n n}{L \log n}} \right\} \geq \sqrt{\alpha}, \]
which proves the asymptotic optimality of Corollary \ref{cor_mod_poly} in this non-parametric setting.

\subsection{Computational aspects}
The freedom in the choice of model $m$, class $\cC,$ sets $C_h(p,q)$ and parameters $h,\delta$ and $\eps$ gives considerable flexibility in the construction of the hun-estimator. Depending on the choice of these quantities, computation of the hun-estimator may or may not be feasible in practice. 

In this section, we give an example where computation of the hun-estimator can be achieved with reasonable complexity.  Consider the piecewise polynomial model $m(r,\cI(j))$ with $\cI(j)$ a cubic dyadic partition of $\R^d$:
\[ \cI(j) = \left\{ \prod_{i = 1}^d  \left[k_i 2^{-j}, (k_i+1)2^{-j}  \right) : \gk \in \Z^d \right\}. \]
Let $\cC = \cI(j+l)$ for some large enough positive integer $l$. For any $p,q \in m(r,\cI(j)),$ let $C_h(p,q)$ be a cell of $\cC$ wherein $|p-q|$ reaches its maximum (extending $p-q$ by continuity to the boundary if necessary). Then, by an argument similar to that of Section \ref{sec-piec_poly-1mod} and Proposition \ref{prop_lb_kap_m_poly}, we can show that
\[ \frac{\left| \int_{C_h(p,q)} (p-q)d\mu \right|}{\mu(C_h(p,q)) + h} \geq \frac{1}{2} \normfty{p-q}, \]
for all $h \leq c 2^{-(j+l)},$ where $c$ is a constant depending only on $r,d$. 

Since the model $m = m(r,\cI(j))$ is a vector space, for any fixed $p$, $\{p-q : q \in m\} = m$. Since a piecewise polynomial of degree $r \geq 2$ can reach its essential supremum anywhere and be of either sign at this point, $\{C_h(p,q): q \in m\} = \cC$ and
\[ \sup_{q \in m} T^{(h)}(\gX,p,q) = \sup_{C \in \cC} \frac{1}{2^{-(k+l)d} + h} \left| \frac{1}{n} \sum_{i = 1}^n \1_C(X_i) - P(C) \right|. \]
The hun-estimator is obtained by optimizing the above quantity over $p \in m$. Since the dyadic partitions are nested, the polynomial pieces can be chosen independently, by optimizing the function
\[ F_I(p_I) = \max_{C \in \cC: C \subset I} \left|N_n(C) - n \int_{C} p_I d\mu \right| \]
over polynomials $p_I$ of degree $\leq r$ on $I \in \cI(j)$, where $N_n(C)$ denotes the number of data points in $C$. For each $I \in \cI(j)$ which contains no data point, we can clearly set $p_I = 0$ on $I$. Hence, for each of the at most $n$ cells of $\cI(j)$ which contain a data point, we have to minimize the convex function $F_I$ over the vector space of polynomial functions of degree $\leq r$. Note that  the maximum in the definition of $F_I$ runs over a maximum of $2^{ld}$ sets. 

Thus, this hun-estimator can be computed in two steps:
\begin{itemize}
    \item For each cell $C \in \cI(j+l)$ which contains some $X_i$, count the number of data points in it.
    \item For each non-empty $I \in \cI(j),$ optimize the convex function $F_I$ over the set of polynomial functions of degree at most $r$.
\end{itemize}
It follows that this hun-estimator can be computed in polynomial time with respect to the sample size $n$. The computational complexity is exponential in the data dimension $d$, which is logical given that the model dimension is itself exponential in $d$ ("curse of dimensionality").    

\section{Structural assumptions and dimension reduction}
Nonparametric estimation under classical smoothness assumptions is known to suffer from a \emph{curse of dimensionality}: the optimal rates of convergence decrease with the dimension, which restricts applicability of such methods to low dimensions. 
%% A vérifier
To mitigate this problem, a number of dimensionally reduced models have been put forward in the literature.
This dimension reduction is accomplished by making a \emph{structural assumption} on the density $p$, in order to reduce the number of arguments to the functional parameters. 

\subsection{Some models based on structural assumptions}
We will consider three main examples.

\begin{ex}[Single and multi-index models]
  This model, proposed by  \citet{SamTsyb2007}, was named "multi-index model" by analogy with the non-parametric regression setting. 
It consists of densities of the form
\[ p(x) = f(B^T x) \phi_d(x), \]
where $\phi_d$ is the $d-$dimensional Gaussian density function, $B$ is a $d\times k$ matrix with orthogonal columns ($k < d$) and $f: \mathbb{R}^k \to +\infty$ is the functional parameter, on which additional smoothness assumptions may be made. In particular, we shall be interested in the "single-index" case ($k = 1$):
\[ p(x) = f(\langle \theta,x \rangle) \phi_d(x), \]
for some unit vector $\theta$.
\end{ex}

\begin{ex}[Independence structure]
In this model, proposed by \citet{Lepski2013} we assume that $p$ can be written as a product:
\[ p(x) = \prod_{B \in \cB} p_B(x_B), \]
where $\cB$ is a partition of $\{1,\ldots,d\}$, $x_B = (x_i)_{i \in B}$ and the $p_B : \R^B \to \R_+$ are probability density functions satisfying certain smoothness assumptions.    
\end{ex}

\begin{ex}[Independent component analysis (ICA)]
    This model was proposed by \citet{SamTsyb2004} and studied more recently by \citet{LepReb2020}. It consists of densities of the form
    \[ p(x) = |\mathrm{det} B| \prod_{j = 1}^d p_j(\beta_j^T x), \]
    where the $p_j$ are univariate probability densities and $B$ is a non-singular matrix with columns $(\beta_j)_{1 \leq j \leq d}$.
\end{ex}

To the best of our knowledge, estimators with optimal performance with respect to the supremum norm have not been constructed for the single/multi-index or ICA models.
For the independence structure assumption, optimal estimators were defined by \citet{Lepski2013}.

We believe that the hun-estimator also has optimal performance in these settings. We shall prove this for the single-index model, and give justification for the independence structure assumption. However, the analysis is more involved than in the previous section. In particular, because of the non-linearity of these models, the value of the constant $\kappa_m(h)$ may not reflect the expected performance. Instead, we resort to equation \eqref{eq_or_ineq_mod_pbar-h} of Theorem \ref{thm_or_ineq_mod} combined with a direct, possibly non-linear lower bound on $|\overline{p} - p|_h$ for a fixed $\overline{p} \in m$ and all $p \in m$. 

\subsection{Non-linear bounds and the projection argument illustrated under the independence structure assumption} \label{sec_proj_arg}

To illustrate the flexibility of our approach, consider the independence structure assumption with \emph{known} partition $\cB$, for a density $p$ defined on $[0,1]^d$. The structure of the density suggests the following model:
\[ m_{\cB} = \left\{ \prod_{B \in \cB} p_B(x_B) : p_B \in m_{B}, p_B \text{ pdf} \right\}, \]
where for any set of indices $B,$ $m_B$ is a finite-dimensional space of functions $f: [0,1]^B \to \R$, for example the piecewise polynomials defined in the previous section. Let $\cC$ be the class of products of $d$ intervals. Let $\cC_B$ be the corresponding class in $\R^B$. For any $C \in \cC_B,$ let 
\[\Tilde{C} = \left\{ x \in [0,1]^d : x_B = (x_i)_{i \in B} \in C \right\} \in \cC. \]
Then for any $p,q \in m_{\cB}$,
\[ \frac{\int_{\Tilde{C}} (p-q) d\mu}{\mu(\Tilde{C}) + h} = \frac{\int_{C} (p_B - q_B)(x_B) d\mu(x_B)}{\mu(C) + h},   \]
which proves that
\begin{equation} \label{eq_ubd_hnorm_indt_struct}
    |p-q|_h \geq \max_{B \in \cB} |p_B - q_B|_h \geq \left(\min_{B \in \cB} \kappa_{m_{B}}(h) \right) \times \max_{B \in \cB} \normfty{p_B - q_B}. 
\end{equation}
This relates the behaviour of $|\cdot|_h$ on $m_{\cB}$ to that on the lower dimensional spaces $m_{B}$.
It remains to relate $\normfty{p - q}$ to $\max_{B \in \cB} \normfty{p_B - q_B}$. The following simple non-linear bound holds.
\begin{lem} \label{lem_ind_struct}
  For any $\overline{p}, p \in m_{\cB},$
  \[ \normfty{\overline{p} - p} \leq \normfty{\overline{p}}\left[ \left(1 + \max_{B \in \cB} \normfty{\overline{p}_B - p_B} \right)^{|\cB|} - 1 \right]. \]
\end{lem}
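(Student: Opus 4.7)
The plan is to prove this by multilinear expansion of the product structure, exploiting the fact that in the independence model the factors depend on disjoint coordinate subsets, so sup-norms split multiplicatively.

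First I would write $\eta_B = p_B - \overline{p}_B$ and expand
\[ p = \prod_{B \in \cB} (\overline{p}_B + \eta_B) = \sum_{S \subseteq \cB} \Bigl(\prod_{B \in S^c} \overline{p}_B \Bigr) \Bigl( \prod_{B \in S} \eta_B \Bigr), \]
so that
\[ p - \overline{p} = \sum_{\emptyset \neq S \subseteq \cB} \Bigl(\prod_{B \in S^c} \overline{p}_B(x_{S^c}) \Bigr) \Bigl( \prod_{B \in S} \eta_B(x_B) \Bigr). \]
This is the discrete-derivative/telescoping expansion that is standard in such settings; it is purely algebraic and involves no analysis.

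The key observation I would then use is that on the product space $[0,1]^d = \prod_{B \in \cB}[0,1]^{|B|}$, a function of the form $\prod_B f_B(x_B)$ with disjoint coordinate groups satisfies
\[ \normfty{\prod_B f_B} = \prod_B \normfty{f_B}, \]
since the essential supremum of a product over independent coordinates factorizes. Applying the triangle inequality to the expansion above and this product identity to each summand yields
\[ \normfty{p - \overline{p}} \leq \sum_{\emptyset \neq S \subseteq \cB} \Bigl(\prod_{B \in S^c} \normfty{\overline{p}_B} \Bigr) \Bigl( \prod_{B \in S} \normfty{\eta_B} \Bigr). \]

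The last step is to factor $\normfty{\overline{p}} = \prod_{B \in \cB}\normfty{\overline{p}_B}$ out of the right-hand side. Here I would invoke the fact that each $\overline{p}_B$ is a probability density on $[0,1]^{|B|}$, a set of Lebesgue measure one, so $\normfty{\overline{p}_B} \ge 1$; hence $\prod_{B \in S^c}\normfty{\overline{p}_B} \le \normfty{\overline{p}}$ for every $S$. Bounding each $\normfty{\eta_B}$ by $r := \max_{B \in \cB}\normfty{\overline{p}_B - p_B}$ then collapses the sum into a binomial one:
\[ \normfty{p - \overline{p}} \leq \normfty{\overline{p}} \sum_{\emptyset \neq S \subseteq \cB} r^{|S|} = \normfty{\overline{p}} \left[ (1 + r)^{|\cB|} - 1 \right], \]
which is exactly the claimed bound.

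The only step requiring a little care (rather than routine algebra) is the factorization of the sup-norm across coordinate blocks and, relatedly, the $\normfty{\overline{p}_B} \ge 1$ bound that lets us replace a partial product of factors by the full product $\normfty{\overline{p}}$; everything else is the standard multilinear identity plus the triangle inequality. I do not expect any serious obstacle.
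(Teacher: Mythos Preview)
Your argument is correct. The multilinear expansion of $\prod_B(\overline p_B+\eta_B)$, the factorization $\normfty{\prod_B f_B(x_B)}=\prod_B\normfty{f_B}$ for functions of disjoint coordinate blocks, and the bound $\normfty{\overline p_B}\ge 1$ (each $\overline p_B$ being a pdf on the unit cube $[0,1]^{|B|}$) are exactly the three ingredients needed; the binomial collapse then gives the stated inequality.

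The paper defers the proof to the Supplementary Material, so a line-by-line comparison is not possible from the text provided. That said, for this lemma there are essentially two standard routes: your full subset expansion, or the telescoping identity
\[
p-\overline p=\sum_{k=1}^{|\cB|}\Bigl(\prod_{i<k}p_{B_i}\Bigr)(p_{B_k}-\overline p_{B_k})\Bigl(\prod_{i>k}\overline p_{B_i}\Bigr),
\]
followed by $\normfty{p_{B_i}}\le(1+r)\normfty{\overline p_{B_i}}$ and a geometric sum. Both use the same two facts ($\normfty{\overline p_B}\ge 1$ and multiplicativity of the sup-norm across blocks) and land on the same bound, so whichever the supplement chooses, your proof is at worst a cosmetic variant of it.
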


Plugging this bound and equation \eqref{eq_ubd_hnorm_indt_struct} into equation \eqref{eq_or_ineq_mod_pbar-h} of Theorem \ref{thm_or_ineq_mod} yields an oracle inequality for the hun-estimator on $m_{\cB}$. Of course, in practice we do not want to assume that the independence structure $\cB$ is known. This is a common feature of the above models: given some finite-dimensional parameter (a partition, multi-index, or matrix), estimation is easy by standard non-parametric methods, and the main difficulty lies in the simultaneous estimation of the parameter and one or more functions.

In the context of hun-estimation, this difficulty can be handled, at least in some cases, by a \emph{projection argument}. Consider a model of the form
\[m = \bigcup \left\{m_\theta : \theta \in \Theta \right\}, \]
where $\theta$ is the unknown parameter and for each $\theta$, a lower bound is known on $|p-q|_h$ for $p,q \in m_\theta$ (for example, a good value of $\kappa_{m_\theta}(h)$). 

Assume that for each $\theta \in \Theta$, we have projections $Q_\theta: m \to m_\theta$ which are uniformly Lipschitz, or at least equi-continuous.  For example, for an independence structure with partition $\cB$, we may define
\[ Q_{\cB}(p)(x)  = \prod_{B \in \cB} p_B(x_B) = \prod_{B \in \cB} \int_{[0,1]^{B^c}} p(x) dx_{B^c},  \]
the product of the appropriate marginals. Then, for any $\overline{p} \in m$ and $p \in m_\theta$, we have for example
\begin{align*}
    \normfty{Q_\theta \overline{p} - p} &\leq \frac{|Q_\theta \overline{p} - p|_h}{\kappa_{m_\theta}(h)} \\
    &\leq \frac{w(\overline{p},|\overline{p} - p|_h)}{\kappa_{m_\theta}(h)},
\end{align*}
where $w$ is a modulus of continuity of the $Q_\theta$ at $\overline{p}$. 

On the other hand, for any $q\in m_\theta$,
\[ \normfty{Q_\theta \overline{p} - \overline{p}} \leq \normfty{Q_\theta \overline{p} - Q_\theta q} + \normfty{q - \overline{p}} \leq w(\overline{p}, \normfty{\overline{p} - q}) + \normfty{q - \overline{p}}. \]
Thus, if we can show that for all $\theta \in \Theta, p \in m_\theta$, $|\overline{p} - p|_h$ bounds the approximation error 
\[ \inf_{q \in m_\theta} \normfty{\overline{p} - q}, \]
then the triangle inequality will yield the desired bound of $\normfty{\overline{p} - p}$ by a function of $|\overline{p} - p|_h$ and $\overline{p}$.

This last step requires a suitable choice of the $\cC$ and $C \in \cC$. For example, in the independence structure setting, we remark that for any partitions $\cB_0,\cB_1$, any $B \in \cB_0, C_B \in \cC_B$ and $p \in m_{\cB_1}$,
\[ \int_{\Tilde{C}_B} p d\mu = \int_{\Tilde{C}_B} \Tilde{p} d\mu \text{ where } \Tilde{p} = \prod_{B_0 \in \cB_0} \prod_{B_1 \in \cB_1} p_{B_0 \cap B_1}, \]
where as before, for a set of variables $A,$ $p_A$ denotes the marginal of $p$ along the variables in $A$. For appropriate models $(m_A)_{A \subset \{1,\ldots,d\}}$, $\Tilde{p}$ belongs to $m_{\cB_0} \cap m_{\cB_1}$. This implies that, when $\overline{p} \in m_{\cB_0}$ and $p \in m_{\cB_1}$,
\[ \left| \overline{p} - p \right|_h \geq \min_{B \in \cB_0} \kappa_{m_B}(h) \times \max_{B \in \cB_0} \normfty{\overline{p} - \Tilde{p}},  \]
which bounds $\normfty{\overline{p} - \Tilde{p}}$ by Lemma \ref{lem_ind_struct}. Since also $\Tilde{p} \in m_{\cB_1}$, this is greater than the approximation error of $\overline{p}$ by $m_{\cB_1},$ as required.

\subsection{Single and multi-index models} \label{sec_si_mod}
For these models, we conjecture  that the hun-estimator attains the optimal rate and that this rate is the same as for estimating the functional parameter on its domain $\R^k,$ $k$ being the number of indices. For the sake of simplicity, we only proved the result in the single-index case, but the technique used should also apply to the general multi-index setting. 
%Since the derivation is quite long and technical, we only present the results. 
%The interested reader may find the proof in Appendix A2.2.

Let $\alpha > 0$ be the smoothness parameter. Given a function $f$ on $\R^k$, let
\[ \norm{f}_{\alpha,\infty} = \max \left( \max_{0 \leq k \leq \lceil \alpha \rceil - 1} \normfty{f^{(k)}}, H_{\alpha + 1 - \lceil \alpha \rceil}\left( f^{(\lceil \alpha \rceil - 1)} \right) \right), \]
where $H_\theta(\cdot)$ denotes the Hölder semi-norm of order $\theta$, for any $\theta \in (0,1]$ (in particular, $H_1(\cdot)$ is the Lipschitz semi-norm). Define the \emph{$\alpha,L$-smooth single index model} on a euclidean space $E$ as  
\[ \cF_{\alpha,L}^{si}(E) = \left\{ p: x \mapsto f \left(\langle \theta,x \rangle \right) e^{\frac{\langle \theta,x\rangle^2}{2}} e^{- \frac{\norm{x}^2}{2}}: \theta \in E, \norm{f}_{\alpha,\infty} \leq L \right\}. \]
Clearly, estimating $p$ as above (with $\theta$ unknown) is at least as hard as estimating the univariate pdf $f$. Using hun-estimation, we can show that it is no harder, at least in terms of rates of convergence.

To estimate densities belonging to $\cF_{\alpha,L}^{si}(E)$, consider for any $r \in \N, h_0 > 0$ the spline space
\[ \cS(r,h_0) = m(r,\cI(h_0)) \cap C^{r-1}(\R) \text{ where } \cI(h_0) = \left\{ [ih_0,(i+1)h_0) : i \in \Z \right\} \]
and let 
\[m_r^{si}(E,h_0) = \left\{ x \mapsto f(\langle \theta,x \rangle) e^{\frac{\langle \theta,x\rangle^2}{2}} e^{- \frac{\norm{x}^2}{2}} : \theta \in E_0, f \in \cS(r,h_0) \right\},  \]
where $E_0$ is a countable dense subset of $E$.
Let $\cC$ be the class of triangular prisms, that is to say, the class of sets of the form:
\[ \left\{ \sum_{i = 1}^d x_i e_i : (x_1,x_2) \in Tr, x_i \in (a_i,b_i), 3 \leq i \leq d \right\}, \]
where $(e_i)_{1 \leq i \leq d}$ is an orthonormal basis of $E$ and $Tr \subset \mathbb{R}^2$ is a triangle. Theorem \ref{thm_si_rates} below shows that the hun-estimator on $m_r^{si}(h_0)$ attains the minimax rate on $\cF_{\alpha,L}^{si}(E)$ 
when $h_0$ is suitably chosen.

\begin{thm} \label{thm_si_rates}
    Let $\alpha > 0, L > 0$ and let $r \geq \alpha$ be an integer. Let
    \[ h_0 = \left( \frac{\log n}{L n} \right)^{\frac{1}{2\alpha + 1}}. \]
    There exist constants $c_1,c_2,c_3$ depending only on $\alpha, r, \mathrm{ dim} \ E$ such that, when $m = m_r^{si}(h_0)$ and $h = c_1 h_0$, given $C_h: \sP^2 \to \cC$ such that $\eps(h,C_h) \leq \frac{1}{97}$ and sufficiently small $\delta > 0$, the hun estimator satisfies the following oracle inequality:
    \[ \mathbb{E} \left[ \normfty{\hat{p}_m^{(h,C_h)} - p\et } \right] \leq 100 \inf_{\overline{p} \in \cF_{\alpha,L}^{si}(E)} \normfty{\overline{p} - p\et} + c_2 L^{\frac{\alpha + 1}{2\alpha + 1}} \left( \frac{\log n}{n} \right)^{\frac{\alpha}{2\alpha + 1}} + c_3 L^{\frac{1}{2\alpha + 1}} \left( \frac{\log n}{n} \right)^{\frac{2\alpha}{2\alpha + 1}}. \]
\end{thm}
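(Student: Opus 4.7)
The plan is to apply the semi-norm oracle inequality \eqref{eq_or_ineq_mod_pbar-h} of Theorem \ref{thm_or_ineq_mod} on the model $m = m_r^{si}(E,h_0)$, then convert the resulting $|\cdot|_h$-bound into an $L^\infty$-bound by the nonlinear projection argument outlined in Section \ref{sec_proj_arg}. Given an arbitrary competitor $\tilde p \in \cF_{\alpha,L}^{si}(E)$ with index $\theta\et$ and univariate profile $f\et$: since $r \geq \alpha$, classical spline approximation yields $\bar f \in \cS(r,h_0)$ with $\normfty{f\et - \bar f} \leq c L h_0^\alpha$, and density of $E_0$ in $E$ lets one pick $\bar\theta \in E_0$ arbitrarily close to $\theta\et$, so that the density $\overline{p}(x) = \bar f(\langle \bar\theta,x\rangle) e^{\langle \bar\theta,x\rangle^2/2 - \|x\|^2/2}$ lies in $m$ with $\normfty{\tilde p - \overline{p}} \lesssim L h_0^\alpha$. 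Applying \eqref{eq_or_ineq_mod_pbar-h} with this $\overline{p}$, using $|\cdot|_h \leq \normfty{\cdot}$, the hypothesis $\eps \leq 1/97$, and Proposition \ref{prop_bern_unif} to control the stochastic term, produces a high-probability bound
\[ |\overline{p} - \hat p_m^{(h,C_h)}|_h \lesssim \normfty{p\et - \tilde p} + L h_0^\alpha + \delta + \sqrt{\tfrac{L(\Gamma + x)}{n h_0}} + \tfrac{\Gamma + x}{n h_0}. \]

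The heart of the proof will be a nonlinear lower bound of the form
\[ \normfty{\overline{p} - \hat p_m^{(h,C_h)}} \lesssim |\overline{p} - \hat p_m^{(h,C_h)}|_h + L h_0^\alpha, \]
with implicit constant depending only on $\alpha,r,\dim E$. For any $p \in m$ with index $\theta \in E_0$, the difference $\overline{p} - p$ is a function of the two linear forms $\langle \bar\theta,x\rangle$ and $\langle \theta,x\rangle$, modulated by the Gaussian factor $e^{-\|x\|^2/2}$. Choosing $\cC$ to be triangular prisms makes it possible to place the base of each prism in the plane spanned by $\bar\theta$ and $\theta$, reducing $|\cdot|_h$ to a two-dimensional integral on a region where the Gaussian modulation is of order one. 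Two regimes then arise: when $\bar\theta \approx \theta$, the problem reduces to the univariate spline setting of Section \ref{sec-piec_poly-1mod} and a one-dimensional analogue of Proposition \ref{prop_lb_kap_m_poly} yields $|\overline{p} - p|_h \gtrsim \gamma_{r,1}(h/h_0)\normfty{\overline{p} - p}$ for $h = c_1 h_0$ with $c_1$ small; when $\bar\theta$ and $\theta$ are far apart, Gaussian concentration along their respective axes ensures that a prism placed near one axis picks out $\overline{p}$ (or $p$) alone, so that $\normfty{\overline{p}}$ and $\normfty{p}$ can be recovered separately. Introducing the projection $Q_\theta \overline{p}$ onto $m_\theta$ obtained by marginalizing along the orthogonal complement of $\theta$, and combining the two regimes via the triangle inequality and an equi-continuity estimate on $Q_\theta$, assembles the required bound.

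Combining the two steps, taking expectations via integration of the tail given by Proposition \ref{prop_bern_unif}, taking the infimum over $\tilde p \in \cF_{\alpha,L}^{si}(E)$, and finally choosing $h_0 = (\log n/(Ln))^{1/(2\alpha+1)}$ to balance the approximation term $L h_0^\alpha$ against the stochastic term $\sqrt{L \log n/(n h_0)}$ produces the stated rates (the $L^{1/(2\alpha+1)}(\log n/n)^{2\alpha/(2\alpha+1)}$ term comes from the residual $(\log n)/(n h_0)$). The main obstacle I expect is the nonlinear projection step: because sums of single-index densities with different indices leave the model, the constant $\kappa_m(h)$ is zero on $m$, so one cannot separate the problem into ``estimate $\theta$'' and ``estimate $f$'' cleanly. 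The two-regime analysis sketched above, and a careful accounting showing that the approximation error $L h_0^\alpha$ absorbs all cross-terms produced by the modulus of continuity of $Q_\theta$ and by the discretization of $E$ via $E_0$, is where the bulk of the technical work lies.
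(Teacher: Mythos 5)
Your proposal correctly identifies the high-level strategy the paper announces for this theorem: apply the $|\cdot|_h$-version of the oracle inequality, eq.~\eqref{eq_or_ineq_mod_pbar-h}, with a fixed $\overline{p}\in m$ obtained by spline-approximating the competitor's profile $f\et$ and discretizing $\theta\et$ to $E_0$; then pass from $|\overline{p}-\hat p_m^{(h,C_h)}|_h$ to $\normfty{\overline{p}-\hat p_m^{(h,C_h)}}$ by a nonlinear lower bound that is local at $\overline{p}$ rather than uniform over $m\times m$ (since $\kappa_m(h)=0$ here, a point you correctly diagnose). Your rate bookkeeping — $Lh_0^\alpha$ balancing $\sqrt{L\log n/(nh_0)}$ with $h_0=(\log n/(Ln))^{1/(2\alpha+1)}$, and $\log n/(nh_0)$ producing the residual $L^{1/(2\alpha+1)}(\log n/n)^{2\alpha/(2\alpha+1)}$ term — is consistent with the theorem's conclusion.

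However, there is a genuine gap, and it sits exactly where you flag it: the nonlinear lower bound
\[
\normfty{\overline{p}-p}\lesssim |\overline{p}-p|_h + Lh_0^\alpha\qquad\text{for all }p\in m
\]
is asserted, not proved. The two-regime dichotomy you sketch does not close the argument. In the ``$\bar\theta\approx\theta$'' regime you invoke a one-dimensional analogue of Proposition~\ref{prop_lb_kap_m_poly}, but $\overline{p}-p$ is not a piecewise polynomial on a known partition: it is $\bar f(\langle\bar\theta,\cdot\rangle)e^{-\|y_{\bar\theta}\|^2/2} - f(\langle\theta,\cdot\rangle)e^{-\|y_\theta\|^2/2}$, whose spline knots are located on two rotated families of hyperplanes and which carries a Gaussian modulation transverse to each. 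Even for $\bar\theta=\theta$ the reduction to the 1D case requires controlling the loss from the transverse Gaussian factor inside the prism; for $\bar\theta\neq\theta$ the knot sets are misaligned and the piecewise-polynomial structure in the direction $\bar\theta$ is destroyed, so Proposition~\ref{prop_lb_kap_m_poly} does not directly apply. In the ``far'' regime, Gaussian concentration along distinct axes lets you isolate mass from each density, but ``recover $\normfty{\overline{p}}$ and $\normfty{p}$ separately'' does not by itself bound $\normfty{\overline{p}-p}$ — the supremum of $|\overline{p}-p|$ may occur near the origin where the two axes intersect and the two densities interfere, and no prism isolates that region. Most importantly, the intermediate regime — indices neither close enough for the univariate reduction nor far enough for axis separation — is precisely where the uncertainty on the knot locations (which the paper explicitly singles out as the obstruction to a $\kappa_m$-type bound) has to be tamed, and your sketch does not say how. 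Without a quantitative threshold defining the two regimes, a continuity modulus for the projection $Q_\theta$ adapted to the single-index geometry (marginalization along $\theta^\perp$ followed by spline re-approximation on the $\theta$-grid, not just marginalization), and a verification that all the resulting error terms are $\lesssim Lh_0^\alpha$, the central inequality remains unestablished and the theorem does not follow.
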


Note that, even though the hun estimator attains the minimax rate on $\cF_{\alpha,L}^{si}(E),$ there does not seem to be an oracle inequality on the model $m_r^{si}(E,h_0)$ comparable to Corollary \ref{cor_mod_poly}. This is due to the uncertainty on the index $\theta$ creating uncertainty about the location of the spline knots. The fact that these knots are known and fixed is crucial to the proof of Proposition \ref{prop_lb_kap_m_poly}. For similar reasons, the proof requires the elements of the model $m$ to be as regular as the functions to be estimated, which is why this section uses splines instead of piecewise polynomials.  
%While proposition \ref{prop_consistence} guarantees that $\kappa_{m_{\cB}}(h) \to 1,$ this is based on embedding $m_{\cB}$ in a vector space, which in this case has much higher dimension that $m_{\cB}$ itself.

\section{Model selection and adaptivity}
\subsection{General approach} \label{sec:mod_select_method}

Let $\mathcal{M}$ be a collection of \emph{models} and let $\gM= \cup_{m \in \mathcal{M}} m$. Let $C_h: \sP^2 \to \cC$ be a collection of functions (parametrized by $h$) such that $\varepsilon_{\gM}(h,C_h) \leq \varepsilon < 1$.  In principle, the tests $t_{p,q}^{(h,C_h)}$ for a fixed $h$ could be used to select an element of $\gM$. However, the corresponding constant
$\kappa_\gM(h) \leq \inf_{m \in \cM} \kappa_m(h)$
is worse than that of all the individual models. Equivalently, if we let, for any model $m \subset \sP$, $h_m$ be such that $\kappa_m(h_m) = \frac{1}{2}$ (say), then 
$ h_{\gM} \leq \inf_{m \in \cM} h_m. $
In particular, if the models are nested, the value of $h_{\gM}$ is that of the largest model.  

It would be desirable to instead use different values of $h$ depending on the models to which $p,q$ belong, so as to obtain an estimator which performs as well as the best single-model estimator in the collection $(\hat{p}_m^{(h_m,C_{h_m})})_{m \in \cM}$. To achieve this goal of \emph{model selection}, some means of estimating the statistical error $Z(h)$ is needed. Theorem \ref{thm_or_ineq_mod} provides an upper bound on $Z(h)$ which is almost fully explicit: it only depends on $\gP\et$ through $|p \et|_h$. We now show how this quantity can be estimated.

\begin{df}
For any $h > 0$, let
\[ |\hat{p}|_h = \sup_{C \in \cC} \left\{ \frac{\sum_{i = 1}^n \1_C(X_i)}{n(\mu(C) + h)} \right\}.   \]
\end{df}

The following proposition shows that $|\hat{p}|_h$ is an adequate estimator of $|p\et|_h$.

\begin{prop} \label{prop_comp_p-eth_p-hath}
On $\Omega_x$, for all $\theta \in (0,2)$,
\begin{align}
    |p\et|_h &\leq \frac{1}{1 - \frac{\theta}{2}} |\hat{p}|_h + \frac{29^2}{\theta(2 - \theta)} \frac{\Gamma + x}{h n} \label{eq_comp_pet_phat} \\
    |\hat{p}|_h &\leq \left(1 + \frac{\theta}{2} \right) |p \et|_h + \frac{29^2}{2\theta} \frac{\Gamma + x}{h n} \label{eq_comp_phat_pet},
\end{align}
where $\Gamma$ and $\Omega_x$ are as in Definition \ref{def_Omega}.
\end{prop}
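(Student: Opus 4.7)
The plan is to derive both inequalities from a single pointwise bound on $\bigl| |\hat{p}|_h - |p\et|_h \bigr|$ on $\Omega_x$, and then decouple the resulting square-root term via the elementary inequality $\sqrt{ab}\le \frac{\theta}{2}a + \frac{1}{2\theta}b$.

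First, I would fix any $C\in\cC$ and write
\[
\left|\frac{\sum_{i=1}^n \1_C(X_i)/n}{\mu(C)+h} - \frac{P\et(C)}{\mu(C)+h}\right|
\le \frac{1}{\mu(C)+h}\max\!\left(29\sqrt{P\et(C)}\sqrt{\tfrac{\Gamma+x}{n}},\, 20\tfrac{\Gamma+x}{n}\right)
\]
on $\Omega_x$, using Definition~\ref{def_Omega}. The key observation, which makes the bound uniform in $C$, is that $P\et(C)\le |p\et|_h(\mu(C)+h)$ directly from the definition of $|p\et|_h$, so that
\[
\frac{29\sqrt{P\et(C)}}{\mu(C)+h}\sqrt{\tfrac{\Gamma+x}{n}} \le 29\sqrt{\tfrac{|p\et|_h(\Gamma+x)}{n(\mu(C)+h)}}\le 29\sqrt{\tfrac{|p\et|_h(\Gamma+x)}{nh}},
\]
and obviously $(20(\Gamma+x)/n)/(\mu(C)+h)\le 20(\Gamma+x)/(hn)$. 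Taking the supremum over $C\in\cC$ on both sides (noting that the right-hand side is now $C$-independent) yields the two-sided inequality
\[
\bigl||\hat{p}|_h - |p\et|_h\bigr| \le \max\!\left(29\sqrt{\tfrac{|p\et|_h(\Gamma+x)}{hn}},\, \tfrac{20(\Gamma+x)}{hn}\right).
\]

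Next, I would apply the AM-GM inequality $\sqrt{ab}\le \frac{\theta}{2}a+\frac{1}{2\theta}b$ with $a=|p\et|_h$ and $b=29^2(\Gamma+x)/(hn)$ to bound $29\sqrt{|p\et|_h(\Gamma+x)/(hn)}\le \frac{\theta}{2}|p\et|_h + \frac{29^2(\Gamma+x)}{2\theta hn}$. For \eqref{eq_comp_phat_pet}, starting from $|\hat p|_h\le |p\et|_h + (\text{max})$, the first branch gives the bound directly; the second branch is absorbed because $20\le 29^2/(2\theta)$ whenever $\theta\le 29^2/40$, which comfortably holds for $\theta\in(0,2)$. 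For \eqref{eq_comp_pet_phat}, starting from $|p\et|_h\le |\hat p|_h + (\text{max})$, the first branch gives
\[
\Bigl(1-\tfrac{\theta}{2}\Bigr)|p\et|_h \le |\hat p|_h + \tfrac{29^2(\Gamma+x)}{2\theta hn},
\]
which rearranges to the desired form after dividing by $1-\theta/2>0$; the constant $29^2/[\theta(2-\theta)]$ emerges after this division. The second branch is absorbed since $\theta(2-\theta)\le 1$ forces $29^2/[\theta(2-\theta)]\ge 841>20$.

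There is no real obstacle here; the only subtlety is the step of bounding $\sqrt{P\et(C)}/(\mu(C)+h)$ by $\sqrt{|p\et|_h/(\mu(C)+h)}$ in order to decouple the pointwise variance from the class $\cC$ and obtain a $C$-uniform bound, and then picking the correct placement of $\theta$ in the AM-GM split to land on the precise constants $\frac{1}{1-\theta/2}$, $\frac{29^2}{\theta(2-\theta)}$ and $\frac{29^2}{2\theta}$ stated in the proposition.
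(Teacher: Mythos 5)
Your proof is correct. The steps — applying the definition of $\Omega_x$ pointwise, using $P\et(C)\le |p\et|_h(\mu(C)+h)$ to pass to a $C$-uniform bound, taking suprema to obtain $\bigl||\hat p|_h-|p\et|_h\bigr|\le\max\bigl(29\sqrt{|p\et|_h(\Gamma+x)/(hn)},\,20(\Gamma+x)/(hn)\bigr)$, and then the weighted AM--GM split — are precisely the argument one expects here, and the constants $\frac{1}{1-\theta/2}$, $\frac{29^2}{\theta(2-\theta)}$ and $\frac{29^2}{2\theta}$ fall out exactly as you indicate, with the secondary branch $20(\Gamma+x)/(hn)$ dominated by the deterministic term for every $\theta\in(0,2)$.
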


Based on Theorem \ref{thm_or_ineq_mod} and the above proposition with $\theta = \frac{1}{2}$,
let us define the following universal penalty. Let $\log_-$ denote the negative part of the $\log$ function, and let $a > 0$ be some parameter. Let then

\begin{equation} \label{eq_def_pen}
    \pen_a(h) = 29 \sqrt{\frac{4}{3}} \sqrt{\frac{|\hat{p}|_h(\Gamma + a \log_{-}(h))}{ h n}} + \sqrt{\frac{4}{3}} 29^2 \frac{\Gamma + a \log_-(h)}{h n},
\end{equation}
where $\Gamma$ is defined by equation \eqref{eq_def_gam}.

Assume that for any model $m \in \cM$, there is an associated parameter $h_m > 0$, chosen such that $\hat{p}_{m}^{(h_m,C_{h_m})}$ satisfies an oracle inequality on model $m$ with fixed constant independent of $m$, i.e such that 
\[ \kappa_m(h_m) \geq \kappa_0 > 0 \] 
for some constant $\kappa_0$. In order to perform model selection, we need to control the behaviour of the tests $T^{(h)}(\gX,p,q)$ when $p,q$ belong to two different models. It may be that comparing two models is much harder (i.e, requires a much smaller value of $h$) than optimizing performance within a single model. For example, while it is feasible to optimize among piecewise constant functions on a given partition $\cI$, selecting among partitions $\cI$ is impossible in general since the set $(\1_{[a,b]})$ of indicator functions of intervals is non-separable in $L^\infty$.
To avoid such cases, we make the following assumption.

\begin{ass} \label{ass_kap_et}
There exists a constant
\[  \kappa_* = \inf_{m,m' \in \cM} \left\{\kappa_{m \cup m'}(h_m \wedge h_{m'}) \right\} > 0.  \]
\end{ass}

Qualitatively speaking, Assumption \ref{ass_kap_et} states that comparing $p,q$ belonging to $m,m'$ is not significantly harder than comparing $p_1,q_1$ belonging to the same model ($m$ or $m'$).
For example, this is always the case when models are nested.

\begin{rem}
If $\cM$ is totally ordered with respect to inclusion, then 
\[ \kappa_* = \inf_{m \in \cM} \{ \kappa_m(h_m)  \} \geq \kappa_0 > 0. \]
\end{rem}

\begin{proof}
Let $m,m' \in \cM$ and assume without loss of generality that $m' \subset m$. Since $\kappa_m$ is a non-increasing function, 
\[ \kappa_{m \cup m'} (h_m \wedge h_{m'}) = \kappa_m(h_m \wedge h_{m'}) \geq \kappa_m(h_m).  \]
This proves that $\kappa_* \geq \inf_{m \in \cM} \{\kappa_m(h_m)\}$.
On the other hand, taking $m = m'$ in Assumption \ref{ass_kap_et} yields $\kappa_* \leq \kappa_m(h_m)$.
\end{proof}

%Assuming now that $\cM, (h_m)_{m \in \cM}$ satisfy Assumption \ref{ass_kap_et}, 
We construct a model selection procedure as follows.
\begin{df} \label{def_moshun}
Let
$C_\square: \mathbb{R}_+ \times \sP^2 \to \mathcal{C}$ be a collection of function parametrized by $h > 0$. For any $h > 0,$ 
%such that
%\[ \sup_{}\varepsilon(h,C_h) \leq \varepsilon < 1. \] 
let $T^{(h)} = T^{(h,C_h)}$ be the corresponding $T-$test statistic. Let $h_{\cM} = (h_m)_{m \in \cM}$ be a sequence of positive reals.
For any $p \in \gM$, let
\begin{equation} \label{eq_def_hp}
   h_p = \sup \left\{ h_m : m \in \cM, p \in m \right\}. 
\end{equation}
For any $p \in \gM$, let then
\begin{equation} \label{eq_def_TcM}
   T_{\cM}^{(h_{\cM},C_{\square})}(\gX,p) = \sup_{q \in \gM} \left\{ T^{(h_p \wedge h_q)}(\gX,p,q) - \pen_a(h_q) \right\} + \pen_a(h_p). 
\end{equation}
A model selection hun-estimator (moshun-estimator) is defined to be any random element 
\[ \hat{p}_{\cM}^{(h_{\cM},C_{\square})} \in \gM \] 
such that
\begin{equation} \label{eq_def_moshun}
   T_{\cM}^{(h_{\cM},C_{\square})}\left(\gX,\hat{p}_{\cM}^{(h_{\cM},C_{\square})} \right) \leq \inf_{p \in \gM} \left\{ T_{\cM}^{(h_{\cM},C_{\square})}(\gX,p) \right\} + \delta.   
\end{equation} 
\end{df}

As for the hun-estimator, we shall shorten the notation to $T_\cM, \hat{p}_\cM$ whenever the sequence $h_\cM$ and functions $C_\square$ are fixed. Given the assumptions discussed above, the moshun-estimator satisfies the following oracle inequality.

\begin{thm} \label{thm_mod_select_gen}
Let Assumption \ref{ass_kap_et} hold for a sequence $h_{\cM}$ and some $\kappa_* > 0$. Let $C_\square$ be such that 
\begin{equation} \label{eq_eps_moshun}
\sup_{h > 0} \varepsilon_{\gM}(h,C_h) \leq \varepsilon < 1. 
\end{equation}
For all $y \geq e$, on an event ($\Omega_{a \log y}$) with probability greater than $1 - \frac{2}{y^a}$,
\begin{equation}
    \begin{split}
        (1 - \eps) \kappa_* \normfty{\hat{p}_{\cM}^{(h_{\cM},C_{\square})} - p \et} &\leq \inf_{m \in \cM} \left\{ (2 + (1-\eps)\kappa_*) \inf_{p \in m} \{ \normfty{p - p \et} \} + 4 \pen_a(h_m) \right\} \\
        &\quad + 29 y \sqrt{\frac{2 a}{3 e n}} + 29^2 \frac{4}{\sqrt{3}} \frac{a y}{e n} + \delta.
    \end{split}
\end{equation}
\end{thm}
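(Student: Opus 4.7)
The plan is to mirror the single-model proof of Theorem \ref{thm_or_ineq_mod}, but replace the single $Z(h)$ deviation term by a model-selection control that uses the penalty. All work takes place on the event $\Omega_{a\log y}$, which by Proposition \ref{prop_bern_unif} has probability at least $1-2/y^a$. Fix an arbitrary model $m\in\cM$ and an approximant $\overline{p}\in m$; write $\hat{p}=\hat{p}_{\cM}^{(h_{\cM},C_\square)}$, $\hat{h}=h_{\hat{p}}$, and $h_*=h_{\overline{p}}$, noting that $h_*\geq h_m$ by definition of $h_p$ in \eqref{eq_def_hp}. The defining inequality \eqref{eq_def_moshun} gives $T_{\cM}(\gX,\hat{p})\leq T_{\cM}(\gX,\overline{p})+\delta$; I will now lower-bound the left-hand side and upper-bound the right-hand side.

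For the lower bound, I select $q=\overline{p}$ in the supremum in \eqref{eq_def_TcM}, getting $T_\cM(\gX,\hat p)\geq T^{(\hat h\wedge h_*)}(\gX,\hat p,\overline p)-\pen_a(h_*)+\pen_a(\hat h)$. Applying the deviation inequality \eqref{eq_rem_ubd-dev-Z} and the linear lower bound from the Remark, combined with $|q-p\et|_h\leq\normfty{q-p\et}$ and Assumption \ref{ass_kap_et} in the form $|\hat p-\overline p|_{\hat h\wedge h_*}\geq \kappa_*\,\normfty{\hat p-\overline p}$, yields
\[
T_\cM(\gX,\hat p)\;\geq\;(1-\eps)\kappa_*\normfty{\hat p-\overline p}-\normfty{\overline p-p\et}-Z(\hat h\wedge h_*)-\pen_a(h_*)+\pen_a(\hat h).
\]
For the upper bound on $T_\cM(\gX,\overline p)$, each summand inside the supremum satisfies $T^{(h_*\wedge h_q)}(\gX,\overline p,q)-\pen_a(h_q)\leq Z(h_*\wedge h_q)+|\overline p-p\et|_{h_*\wedge h_q}-\pen_a(h_q)\leq \normfty{\overline p-p\et}+Z(h_*\wedge h_q)-\pen_a(h_q)$, using $\Delta_{p\et}^{(h)}(\overline p,q)\leq|\overline p-p\et|_h\leq \normfty{\overline p-p\et}$.

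Combining these two bounds and rearranging, the term $Z(\hat h\wedge h_*)-\pen_a(\hat h)$ from the lower bound pairs with $\sup_q\{Z(h_*\wedge h_q)-\pen_a(h_q)\}$ from the upper bound (take $q=\hat p$ in the sup). A case split on $h_q\lessgtr h_*$, using that $\pen_a$ is non-increasing in $h$, shows that each of these two quantities is at most $\pen_a(h_*)+\sup_{h>0}\{Z(h)-\pen_a(h)\}$, giving
\[
(1-\eps)\kappa_*\normfty{\hat p-\overline p}\;\leq\;2\normfty{\overline p-p\et}+4\pen_a(h_*)+2\sup_{h>0}\{Z(h)-\pen_a(h)\}+\delta.
\]

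The technical heart of the proof, and the main obstacle, is now to bound $\sup_{h>0}\{Z(h)-\pen_a(h)\}$ on $\Omega_{a\log y}$ by the advertised $h$-independent remainder. From Proposition \ref{prop_bern_unif} I obtain $Z(h)\leq 29\sqrt{|p\et|_h(\Gamma+a\log y)/(hn)}+20(\Gamma+a\log y)/(hn)$. Plugging in the comparison \eqref{eq_comp_pet_phat} of Proposition \ref{prop_comp_p-eth_p-hath} at $\theta=1/2$ and using $\sqrt{A+B}\leq\sqrt A+\sqrt B$ converts this into an $|\hat p|_h$-based bound matching the shape of $\pen_a(h)$, up to replacing $\Gamma+a\log_-(h)$ by $\Gamma+a\log y$. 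The surplus is controlled via the elementary inequality $\log x\leq x/e$: writing $s(h)=a(\log y-\log_-(h))$, one checks $s(h)/h\leq ay/e$ uniformly in $h$ (the cases $h\geq 1$ and $h\in(1/y,1)$ being handled separately, while $h<1/y$ makes $s(h)\leq 0$ and the bound trivial). This gives $s(h)/(hn)\leq ay/(en)$ and, together with the crude bound $|\hat p|_h\leq 1/h$ and $1/h\leq y$ on $h\geq 1/y$, controls $\sqrt{|\hat p|_h\,s(h)/(hn)}\lesssim y\sqrt{a/(en)}$. Assembling the resulting $h$-free remainder, doubling it, yields exactly the $29y\sqrt{2a/(3en)}+29^2(4/\sqrt 3)ay/(en)$ term in the statement.

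Finally, the triangle inequality $\normfty{\hat p-p\et}\leq\normfty{\hat p-\overline p}+\normfty{\overline p-p\et}$ combined with the bound above yields
\[
(1-\eps)\kappa_*\normfty{\hat p-p\et}\leq(2+(1-\eps)\kappa_*)\normfty{\overline p-p\et}+4\pen_a(h_*)+2\sup_{h>0}\{Z(h)-\pen_a(h)\}+\delta.
\]
Since $\pen_a$ is non-increasing and $h_*\geq h_m$, we have $\pen_a(h_*)\leq\pen_a(h_m)$; taking the infimum over $\overline p\in m$ and then over $m\in\cM$ completes the proof.
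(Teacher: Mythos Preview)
Your argument is correct and follows the same route as the paper: lower-bound $T_\cM(\gX,\hat p)$ by testing against $q=\overline p$, upper-bound $T_\cM(\gX,\overline p)$ via \eqref{eq_rem_ubd-dev-Z}, use the case split and monotonicity of $\pen_a$ to reduce to $4\pen_a(h_*)+2\sup_{h>0}\{Z(h)-\pen_a(h)\}$, and control the latter on $\Omega_{a\log y}$ through Propositions \ref{prop_bern_unif} and \ref{prop_comp_p-eth_p-hath} together with $\log t\le t/e$. Your crude estimate $|\hat p|_h\le y$ times $s(h)/(hn)\le ay/(en)$ yields the square-root remainder with a constant roughly twice the stated one; the exact constant requires the slightly sharper observation $|\hat p|_h\,s(h)/(hn)\le a\log(yh)/(h^2n)\le ay^2/(2en)$ (via $\sup_{t>0}\log t/t^2=1/(2e)$), but this is bookkeeping, not a gap in method.
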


% Ici brouillon

An interesting aspect of Theorem \ref{thm_mod_select_gen} is that the penalty only depends on $(h_m)_{m \in \cM}$ and on the fixed parameter $a$, but not on the number of models. In particular, the theorem also applies to countably infinite collections $\cM$. 

\subsection{Piecewise polynomials on regular dyadic partitions} \label{sec:dyad_mod_coll}
The key question concerning applications of Theorem \ref{thm_mod_select_gen} is for which collections of models Assumption \ref{ass_kap_et} holds. We have already seen that Assumption \ref{ass_kap_et} holds for nested model collections, however the assumption that models are nested is restrictive: it excludes classes of irregular partitions that one would like to use in order to adapt to potentially inhomogeneous or anisotropic smoothness of the target density. 

Perhaps unexpectedly, Assumption \ref{ass_kap_et} turns out to be significantly weaker than nestedness. If $h_m$ is chosen according to Lemma \ref{prop_lb_kap_m_poly} (for a value $h < \frac{h_0}{r^2}$), then Assumption \ref{ass_kap_et} holds over the class $m(r,\cI)$, where $r \in \mathbb{N}$ and $\cI$ belongs to the set of \emph{regular dyadic partitions}, i.e, partitions
\[ \cI(\gj) = \left\{ \prod_{i = 1}^d  [k_i 2^{-j_i}, (k_i + 1) 2^{-j_i}) : \gk = (k_1,\ldots,k_d) \in \Z^d \right\},  \]
for some $\gj \in \Z^d$. For completeness, we prove in Section A3.3 of the Supplementary Material \cite{Proofs} %\ref{app:dyad_intvl} 
that the $\cI(\gj)$ are indeed partitions of $\R^d$, with the property that $\cI(\gj')$ refines $\cI(\gj)$ whenever $\gj' \geq \gj$.

%In the one-dimensional case, this is the consequence of the following simple lemma.

Denote then
\[ \mathfrak{I}_d = \left\{ \cI(\gj) : \gj \in \Z^d \right\} \]
and
\begin{equation} \label{eq_def_dyad_mod_coll}
   \cM_{\gr} = \{ m_{dir}(\gr,\cI) : \cI \in \mathfrak{I}_d \}. 
\end{equation}
For any $m = m_{dir}(\gr,\cI(\gj)) \in \cM_{\gr}$, let
\begin{equation} \label{eq_def_hm}
   h_m =  \frac{\min_{I \in \cI(\gj)} \{\mu(I)\}}{(2\norm{\gr}_1^2)^{d} 4^{d+1}} = \frac{2^{-(j_1 + \ldots + j_d)}}{(2\norm{\gr}_1^2)^{d} 4^{d+1}}. 
\end{equation}
Up to a constant depending only on $\gr,d$, this choice of $h_m$ is the same as in Corollary \ref{cor_mod_poly} and yields the same (almost optimal) risk bound.

Let now $\cC = \cC_{rec}$ be the class of $d-$dimensional open rectangles with sides parallel to the axes, i.e
\[ \cC_{rec} = \{ \prod_{i = 1}^d (a_i,b_i) : a_i, b_i \in \R, a_i < b_i \}. \]
This class generates the Borel sigma-algebra, hence for any $h > 0$, $|\cdot|_h$ is a norm on $\sP$. The following theorem shows that the model collection $\cM_{\gr}$ satisfies Assumption \ref{ass_kap_et} for a constant $\kappa_*$ depending only on $\gr$ and $d$.

\begin{thm} \label{thm_ass_lb_kap_poly}
For all $m,m' \in \cM_{\gr}$ and $h_m,h_{m'}$ defined by equation \eqref{eq_def_hm},
\[ \kappa_{m \cup m'}(h_m \wedge h_{m'}) \geq \left[4\left(1 + 4\sqrt{\prod_{i = 1}^d (r_i+1)} \right) \right]^{-1}. \]
\end{thm}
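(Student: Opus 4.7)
The plan is to split the verification of $|p-q|_h \ge \kappa \normfty{p-q}$ (with $h = h_m \wedge h_{m'}$) according to whether $p$ and $q$ lie in the same model. When $p, q \in m$ (symmetrically when both are in $m'$), Lemma~\ref{lem_prop_kap_m} gives $\kappa_m(h_m \wedge h_{m'}) \ge \kappa_m(h_m)$ by monotonicity, while Proposition~\ref{prop_lb_kap_m_poly} applied to the enclosing space $m(\|\gr\|_1, \cI(\gj)) \supset m_{dir}(\gr, \cI(\gj))$ yields $\kappa_m(h_m) \ge \gamma_{\|\gr\|_1,d}(h_m/h_{0,m})$. Substituting $h_m/h_{0,m} = [(2\|\gr\|_1^2)^d 4^{d+1}]^{-1}$ into the second argument of the maximum in~\eqref{eq_def_gamma_rd} gives $(1-1/4)^2 = 9/16$, which comfortably beats the required bound in any dimension.

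The substance of the theorem is the mixed case $p\in m$, $q\in m'$ with $m \neq m'$. Assume without loss of generality that $\|\gj\|_1 \le \|\gj'\|_1$, so $h = h_{m'}$. Set $M = \normfty{p-q}$ and pick $x_*$ such that $|(p-q)(x_*)| \ge M-\eps$; write $J \in \cI(\gj)$, $J' \in \cI(\gj')$ for the cells containing $x_*$ and $I_* = J \cap J' \in \cI(\gj \vee \gj')$. The key object is the polynomial
\[ P(x) = p|_J(x) - q|_{J'}(x), \]
which has directional degree $\le \gr$ on all of $\R^d$ and satisfies $P = p-q$ on $I_*$, so $\|P\|_{L^\infty(I_*)} = M$; outside $I_*$, $p-q$ may differ from $P$, but $|p-q| \le M$ holds everywhere.

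A test rectangle $C \in \cC_{rec}$ is built around $x_*$ by combining a scaled copy of $I_*$ with a controlled extension into neighboring cells of $\cI(\gj\vee\gj')$. When $\mu(I_*)$ is comparable to $h$ (the nearly-nested regime), $C \subset I_*$ is a rescaling of $I_*$ chosen so that $|P|$ stays close to $M$ on $C$ by a multivariate Markov argument, and $\int_C (p-q)\,d\mu = \int_C P\,d\mu$ directly. When $\mu(I_*) \ll h$ (non-nested models differing in several directions), $C$ must extend beyond $I_*$: the main term $\int_C P\,d\mu$ is bounded from below via a Nikolskii/Christoffel-function inequality relating $\|P\|_{L^\infty}$ to an average of $P$ on a suitable rectangle via the factor $\sqrt{\prod(r_i+1)} = \sqrt{\dim \poldir{\gr}{d}}$, while the error $\int_C ((p-q) - P)\,d\mu$ is absorbed using the global bound $|p-q|\le M$, which couples the polynomial pieces of $p$ and $q$ on adjacent refinement cells despite the absence of any uniform bound on $p$ or $q$ individually.

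The main obstacle is the non-nested case: the test rectangle must simultaneously be large enough that $\mu(C)+h$ does not dominate the denominator, and close enough to $I_*$ that the error coming from $(p-q)-P\ne 0$ off $I_*$ does not swamp the main term. The $\sqrt{\prod(r_i+1)}$ factor in the final constant is precisely the price paid to balance these two competing requirements through the polynomial-theoretic lower bound on $\int_C P\,d\mu$.
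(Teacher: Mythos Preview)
Your treatment of the same-model case is correct, and the reduction in the nearly-nested case (where $\mu(I_*)$ is comparable to $h$) is essentially Proposition~\ref{prop_lb_kap_m_poly}. The gap is in the genuinely non-nested case $\mu(I_*)\ll h$, where your outline does not go through.

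Concretely, your decomposition is $\int_C(p-q)=\int_C P+\int_{C\setminus I_*}\bigl((p-q)-P\bigr)$. You propose to absorb the second term using $|p-q|\le M$, but the integrand also contains $-P$, and you have \emph{no} bound on $|P|$ off $I_*$: a polynomial satisfying $\|P\|_{L^\infty(I_*)}=M$ on a tiny box $I_*$ can be of size $M\cdot(\text{diam}(C)/\text{diam}(I_*))^{r}$ on a larger box $C$. So the error term is not controlled. The ``main term'' is equally problematic: Nikolskii/Christoffel inequalities give $\|P\|_{L^1}\gtrsim \mu\cdot\|P\|_{L^\infty}/c_\gr$, but you need a \emph{signed} lower bound on $\int_C P$, and $P$ can integrate to zero on any prescribed rectangle. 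To see that the whole ``localise around $x_*$'' strategy is the wrong picture, take $d=2$, $\gr=(0,0)$, $\gj=(k,0)$, $\gj'=(0,k)$, $p\equiv 0$ and $q(x_1,x_2)=M(-1)^{\lfloor 2^k x_2\rfloor}$ (on $[0,1]^2$). Here every good test rectangle is a full horizontal strip $[0,1]\times[j2^{-k},(j{+}1)2^{-k})$, i.e.\ a cell of $\cI(\gj')$: it is elongated, spans $2^k$ cells of $\cI(\gj\vee\gj')$, and is not in any sense a perturbation of the $2^{-k}\times2^{-k}$ square $I_*$.

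What actually works is to abandon $I_*$ and exploit that, by the choice $h=h_{m'}$, every cell $J'\in\cI(\gj')$ already has $\mu(J')\asymp h$. On $J'$ the function $q$ is a single polynomial, so one can compare $p|_{J'}$ to its $L^2(J')$-projection onto $\poldir{\gr}{d}$; the factor $\sqrt{\prod_i(r_i+1)}=\sqrt{\dim\poldir{\gr}{d}}$ enters through the resulting $L^2$--$L^\infty$ comparison on that fixed box, not through a Nikolskii inequality for the extended polynomial $P$. One then chooses $C\subset J'$ (so the denominator $\mu(C)+h$ is automatically of the right size) via Proposition~\ref{prop_lb_kap_m_poly} applied to this projected polynomial, and handles the residual using the global bound $|p-q|\le M$ --- which is now legitimate because the residual lives entirely inside $J'$ and involves only averages of $(p-q)$, never the unbounded extension $P$. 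In short: the test set should be built from a cell of the \emph{coarser} of the two partitions at scale $h$, not from the common refinement cell $I_*$.
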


In light of Theorem \ref{thm_mod_select_gen}, Theorem \ref{thm_ass_lb_kap_poly} implies that it is possible to perform \emph{model selection} on the model collection $\cM_{\gr}$, in the sense that, for a suitable $C_\square$, the model-selection estimator $\hat{p}_{\cM_\gr}^{(h_{\cM_\gr},C_\square)}$ given by Definition \ref{def_moshun}
%defined in Section \ref{sec:mod_select_method} 
performs as well as the best estimator in the collection $\{\hat{p}_m^{(h_m)} : m \in \cM_{\gr} \}$, up to a constant depending only on $d,\gr$.

%begin theorem

\section{Rates under anisotropic smoothness} \label{sec-rates}

The oracle inequality satisfied by the hun-estimator (Theorem \ref{thm_or_ineq_mod}), together with the lower bound on $\kappa_m$ for polynomial models (Proposition \ref{prop_lb_kap_m_poly}) allow to recover minimax optimal rates on anisotropic Lipschitz spaces. Moreover, the model selection results in the previous section (Theorems \ref{thm_mod_select_gen} and \ref{thm_ass_lb_kap_poly}) imply that this can be done in an adaptive manner, as we now show.  

Let $\gbeta \in \R^d$ be a multi-index, and let $C^{\gbeta}$ denote the space of functions $f$ which admit partial derivatives $\frac{\partial^{k_i} f}{\partial x_i^{k_i}}$ at all orders $k_i \leq \lfloor \beta_i \rfloor$, and are such that the semi-norms
\[ |f|_{i, \beta_i} := \sup_{x \in \mathbb{R}^d} \sup_{t \in \mathbb{R}} 
\frac{1}{t^{\beta_i - \lfloor \beta_i \rfloor}} \left| \frac{\partial^{\lfloor \beta_i \rfloor} f}{\partial x_i^{\lfloor \beta_i \rfloor}}(x + t e_i) - \frac{\partial^{\lfloor \beta_i \rfloor} f}{\partial x_i^{\lfloor \beta_i \rfloor}}(x) \right| \]
are finite for all $i \in \{1,\ldots,d\}$, where $e_i$ denotes the standard basis of $\R^d$. Note that this only requires regularity along the coordinate directions, and in particular the cross-derivatives may fail to exist.

It is known that the minimax-optimal convergence rate on the class $C^{\gbeta}$ is 
\[\left(\frac{\log n}{n} \right)^{\frac{\beta}{2\beta + d}}, \beta \text{ s.t. } \frac{d}{\beta} = \sum_{i = 1}^d \frac{1}{\beta_i}. \] 
%where $\beta$ is the harmonic mean of the $\beta_i$. 
This follows from results of \citet{Lepski2013}. More precisely, $C^{\gbeta}$ is a special case of the function spaces introduced by \citet{Lepski2013}, with no independence structure and all smoothness measured in sup-norm. The results of this section could be extended to general anisotropic Nikolskii classes using the same Sobolev embedding theorem as in \citep{Lepski2013} (equation (A.8), resulting from \citet[Theorem 6.9]{Nikolskii1977}). As discussed in Section \ref{sec_proj_arg}, there is also reason to think that the hun-estimator can handle the independence structure assumption. The precise norms used to measure smoothness in this article are different from those in \citep{Lepski2013} and give a more precise result, as we shall see later.

Let $\cC = \cC_{rec}$ be the class of products of $d$ open intervals.
%, and let $\hat{p}$ be the moshun-estimator given by Definition \ref{def_moshun}, over the model collection $\cM_{\gr}$ defined in Section \ref{sec:dyad_mod_coll} equation \eqref{eq_def_dyad_mod_coll}, with the values $h_m$ specified by equation \eqref{eq_def_hm} and $C_\square$ such that equation \eqref{eq_eps_moshun} holds. 
To prove that a moshun estimator attains the optimal rate when $p\et \in C^{\gbeta}$, an approximation result is needed in order to bound the term $\inf_{p \in m_{dir}(\gr,\cI(\gj))} \normfty{p - p\et}$. This is the purpose of the following proposition.  

%Résultat d'approximation (énoncé pour h_j fixés). 
%Spécifier le choix optimal des h_j et la vitesse obtenue.
%Rajouter la sélection de modèles
%pour avoir l'adaptation.
\begin{prop} \label{prop_approx_poly}
Let $f \in C^{\gbeta}(\R^d) \cap L^1(\mathbb{R}^d)$.
%Let $\bar{\beta}$ denote the harmonic mean of the $\beta_j$.
%For any $f \in C^{\beta}(\mathbb{R}^d)$, there exist integers $(k_j)_{1 \leq j \leq d}$ and $f_d \in \mathcal{M}^{poly}_{r,\cI}$ such that $\prod_{j = 1}^d 2^{-k_j} \geq \eta$ and
%\[ \normfty{f - f_d} \leq 2 \cdot 3^d \lambda 
%= 2 \cdot 3^d 4^{\bar{\beta}} \eta^{\frac{\bar{\beta}}{d}} \prod_{j = 1}^d \left( \frac{|f|_{j,\beta_j}}{(\lfloor \beta_j \rfloor + 1)!} \right)^{\frac{\bar{\beta}}{d \beta_j}},  \]
Let $\gr \geq \lfloor \gbeta \rfloor$ be a vector of integers, let $\gh = (h_j)_{1 \leq j \leq d}$ be non-negative real numbers and let $\cI_{\gh}$ denote the rectangular partition
\[ \left\{ \prod_{j = 1}^d [k_j h_j, (k_j+1)h_j) : k \in \mathbb{Z}^d \right\}. \]
There exists $f_d \in \overline{m_{dir}}(\gr,\cI_{\gh})$ such that  
\[ \normfty{f - f_d} \leq 2 b_d(\gr) \max_{1 \leq j \leq d} \left\{ \frac{h_j^{\beta_j}}{\lfloor \beta_j \rfloor!} |f|_{j,\beta_j} \right\}, \]
where
\begin{equation} \label{eq_def_bd}
   b_d(\gr) = 1 + \min_{\sigma \in \mathfrak{S}_d} \left\{ \sum_{j = 1}^d \prod_{i = 1}^j \left[\frac{2}{\pi} \log(1 + r_{\sigma(i)}) + 1 \right] \right\}. 
\end{equation}
\end{prop}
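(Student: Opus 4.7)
The plan is to define $f_d$ as a tensor product of one-dimensional projections applied to $f$, then to control the error by a telescoping sum combined with anisotropic Taylor estimates.

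First, for each $j \in \{1,\ldots,d\}$ I would define a linear operator $P_j$ on $L^\infty(\R^d)$ as follows: on each cell $[k_j h_j,(k_j+1)h_j)$ of the $j$-th direction partition, and for arbitrary fixed values of the other coordinates, $(P_j f)$ coincides with the Chebyshev interpolant of degree $r_j$ of the function $x_j \mapsto f(\ldots,x_j,\ldots)$ restricted to that cell (after affine rescaling to $[-1,1]$). By a classical sharp bound on the Lebesgue function of Chebyshev interpolation, $\normfty{P_j g} \leq L_j \normfty{g}$ with $L_j := \frac{2}{\pi}\log(1+r_j)+1$. Moreover $P_j$ reproduces any function which, on each cell, is polynomial of degree $\leq r_j$ in $x_j$; and the operators $P_1,\ldots,P_d$ commute pairwise, since they act in disjoint variables on a common rectangular grid. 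Consequently $f_d^{\sigma} := P_{\sigma(1)}\cdots P_{\sigma(d)} f$ is piecewise polynomial of direction-degree $\leq \gr$ on $\cI_{\gh}$ for any permutation $\sigma$, and belongs to $\overline{m_{dir}}(\gr,\cI_{\gh})$ (possibly after truncating to finitely many cells and rationalizing coefficients, both $L^\infty$-perturbations of arbitrarily small size).

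Second, I would use the telescoping identity
\[ f - f_d^{\sigma} = \sum_{j=1}^d P_{\sigma(1)} \cdots P_{\sigma(j-1)}(I - P_{\sigma(j)}) f. \]
To bound $\normfty{(I - P_{\sigma(j)}) f}$ I exploit the reproduction property: for any $Q$ that is polynomial of degree $\leq r_{\sigma(j)}$ in $x_{\sigma(j)}$ cellwise, $(I - P_{\sigma(j)}) f = (I - P_{\sigma(j)})(f - Q)$, hence $\normfty{(I - P_{\sigma(j)}) f} \leq (1+L_{\sigma(j)}) \normfty{f - Q}$. I choose $Q$ cellwise to be the Taylor polynomial of $f$ in $x_{\sigma(j)}$ of order $\lfloor \beta_{\sigma(j)} \rfloor \leq r_{\sigma(j)}$ at the left endpoint of the cell in direction $\sigma(j)$. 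The assumption $f \in C^{\gbeta}$ and the Taylor remainder bound then yield
\[ \normfty{f - Q} \leq \frac{h_{\sigma(j)}^{\beta_{\sigma(j)}}}{\lfloor \beta_{\sigma(j)} \rfloor !} |f|_{\sigma(j),\beta_{\sigma(j)}} \leq M, \]
where $M$ denotes the maximum on the right-hand side of the statement. Combining with the operator-norm bounds on the preceding $P_{\sigma(i)}$, $i < j$, gives
\[ \normfty{f - f_d^\sigma} \leq M \sum_{j=1}^d (1 + L_{\sigma(j)}) \prod_{i=1}^{j-1} L_{\sigma(i)}. \]

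Third, a direct expansion gives
\[ \sum_{j=1}^d (1 + L_{\sigma(j)}) \prod_{i=1}^{j-1} L_{\sigma(i)} = 1 + 2\sum_{j=1}^{d-1} \prod_{i=1}^j L_{\sigma(i)} + \prod_{i=1}^d L_{\sigma(i)} \leq 2 \Bigl( 1 + \sum_{j=1}^d \prod_{i=1}^j L_{\sigma(i)} \Bigr). \]
Selecting $\sigma$ to minimize the last sum yields the constant $2 b_d(\gr)$ and concludes the error estimate with $f_d := f_d^{\sigma_\star}$ for an optimal $\sigma_\star$. The main obstacle will be establishing the explicit Lebesgue-constant bound $L_j \leq \frac{2}{\pi}\log(1+r_j)+1$ in non-asymptotic form; this constrains the precise choice of interpolation nodes (Chebyshev nodes of the second kind, for which the Ehlich--Zeller explicit bound holds, are a convenient option) so that the inequality is valid for all $r_j$ rather than merely asymptotically.
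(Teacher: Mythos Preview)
Your proposal is correct and matches the paper's approach essentially line for line: the constant $b_d(\gr)$ in the statement is precisely $1+\min_\sigma\sum_{j=1}^d\prod_{i=1}^j L_{\sigma(i)}$ with $L_i=\tfrac{2}{\pi}\log(1+r_i)+1$, which forces the proof to go through a tensor product of one-dimensional degree-$r_j$ interpolation operators with Chebyshev-type Lebesgue constants, a telescoping of $I-P_{\sigma(1)}\cdots P_{\sigma(d)}$, and the directional Taylor remainder bound you wrote. Your algebraic identity $\sum_{j}(1+L_{\sigma(j)})\prod_{i<j}L_{\sigma(i)}=1+2\sum_{j<d}\prod_{i\le j}L_{\sigma(i)}+\prod_{i\le d}L_{\sigma(i)}\le 2b_d^\sigma(\gr)$ is exactly what produces the factor $2$, and the commutativity of the $P_j$ lets you optimize over $\sigma$ afterwards; the only point to pin down carefully, as you note, is a non-asymptotic Lebesgue-constant bound valid for every $r_j\ge 0$.
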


%\ref{app:prop_approx_poly}.
%Let 
%\[ \lambda = 4^{\bar{\beta}} \eta^{\frac{\bar{\beta}}{d}} \prod_{j = 1}^d \left( \frac{|f|_{j,\beta_j}}{(\lfloor \beta_j \rfloor + 1)!} \right)^{\frac{\bar{\beta}}{d \beta_j}}  \]
%and for all $j \in \{1,\ldots,d\}$, let
%\[ h_j = \frac{1}{2} \lambda^{\frac{1}{\beta_j}} \left( \frac{|f|_{j,\beta_j}}{(\lfloor \beta_j \rfloor + 1)!} \right)^{\frac{-1}{\beta_j}}. \]
%For any $j \in \{1,\ldots,d\}$, let $k_j \in \mathbb{Z}$ be such that $2^{-k_j} \leq h_j < 2^{-k_j-1}$. 
%Then, since $\sum_{j = 1}^d \frac{1}{\beta_j} = \frac{d}{\bar{\beta}}$,
%\[ \prod_{j = 1}^d 2^{-k_j} \geq 2^{-d} \prod_{j = 1}^d h_j 
%\geq 4^{-d} \lambda^{\frac{d}{\bar{\beta}}} \prod_{j = 1}^d \left( \frac{|f|_{j,\beta_j}}{(\lfloor \beta_j \rfloor + 1)!} \right)^{\frac{-1}{\beta_j}} \geq \eta. \]
%Moreover, by equation .., the approximation error is bounded by
%\[ \normfty{f - f_d} \leq 2 \cdot 3^d \lambda 
%= 2 \cdot 3^d 4^{\bar{\beta}} \eta^{\frac{\bar{\beta}}{d}} \prod_{j = 1}^d \left( \frac{|f|_{j,\beta_j}}{(\lfloor \beta_j \rfloor + 1)!} \right)^{\frac{\bar{\beta}}{d \beta_j}}, \]
%which proves the result.
Similar results have long been established in the approximation theory literature when $\gbeta \in \N^d$, along with bounds on the approximation error expressed in terms of finite difference operators - the article \citep{Dahmen1980} is particularly relevant. However, these results usually involve a non-explicit constant. Rather than adapt them to our setting, it is just as convenient to give a direct proof, which also provides an explicit constant.
Optimizing the bias-variance tradeoff between approximation error (given by Proposition \ref{prop_approx_poly}) and estimation error (given by $\pen_a(h_m)$ defined in equation \eqref{eq_def_pen}) yields the following theorem.

\begin{thm} \label{thm_rates}
Let $\gbeta \in \R_+^d$ be such that $\lfloor \beta_j \rfloor \leq r_j$ for all $j$. Let 
\[ \beta = \frac{1}{d} \sum_{j = 1}^d \frac{1}{\beta_j}. \]
Let $p\et = \frac{1}{n}\sum_{i = 1}^n p_i\et$. Assuming that $p\et \in C^{\beta}\left( \mathbb{R}^d \right)$, let
\[ L_{\gbeta}(p\et) = \prod_{j = 1}^d |p \et|_{j,\beta_j}^{\frac{\beta}{d \beta_j}}. \]
Let $h_{\cM_{\gr}} = (h_m)_{m \in \cM_{\gr}}$ be given by equation \eqref{eq_def_hm}. There exists some function $C_{\square}: \mathbb{R}_+ \times \sP^2 \to \cC_{rec}$ such that for any $n \in \mathbb{N}$ such that $L_{\gbeta}(p\et)^{\frac{d}{\beta}} \leq \frac{n}{\log n}$,
\[ \mathbb{E} \left[ \normfty{\hat{p}_{\cM_{\gr}}^{(h_{\cM_{\gr}},C_\square)} - p\et} \right] \leq  c \normfty{p\et}^{\frac{\beta}{2\beta + d}} L_{\gbeta}(p\et)^{\frac{d}{2\beta + d}} \left( \frac{\log n}{n} \right)^{\frac{\beta}{2\beta + d}} \]
when $\normfty{p \et} \geq L_{\gbeta}(p\et)^{\frac{d}{\beta + d}} \left( \frac{\log n}{n} \right)^{\frac{\beta}{\beta + d}}$, and
\[ \mathbb{E} \left[ \normfty{\hat{p}_{\cM_{\gr}}^{(h_{\cM_{\gr}},C_\square)} - p\et} \right] \leq c L_{\gbeta}(p\et)^{\frac{d}{\beta + d}} \left( \frac{\log n}{n} \right)^{\frac{\beta}{\beta + d}}  \]
otherwise, where $c$ is a constant which depends only on $\gr,d$.
\end{thm}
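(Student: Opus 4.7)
The plan is to apply the model-selection oracle inequality of Theorem \ref{thm_mod_select_gen} to the collection $\cM_{\gr}$, combine it with the approximation bound of Proposition \ref{prop_approx_poly}, and optimize the resulting bias/variance trade-off over $\gj \in \Z^d$. First I would check that the hypotheses of Theorem \ref{thm_mod_select_gen} are satisfied. Assumption \ref{ass_kap_et} with a value of $\kappa_*$ depending only on $\gr,d$ is precisely the content of Theorem \ref{thm_ass_lb_kap_poly}, and the condition \eqref{eq_eps_moshun} can be enforced by a suitable choice of $C_\square$, obtained by adapting the construction of $C_{h,m}$ in \eqref{eq_def_C_poly} so that $\eps \leq 1/2$ (say) uniformly in $h$ over $\gM = \cup_{m \in \cM_\gr} m$.

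For each $\gj \in \Z^d$ such that $h_j := 2^{-j_i}$ is small enough, Proposition \ref{prop_approx_poly} (applied to a suitable truncation or directly to $p\et$) gives an element of $\overline{m_{dir}(\gr,\cI(\gj))}$ approximating $p\et$ in sup-norm with error at most $C(\gr,d)\max_{1\leq j\leq d}\{h_j^{\beta_j}|p\et|_{j,\beta_j}\}$. Together with the definition of $h_m$ in \eqref{eq_def_hm} and of $\pen_a$ in \eqref{eq_def_pen}, this turns Theorem \ref{thm_mod_select_gen} into a bound of the form
\[
\normfty{\hat p_{\cM_\gr} - p\et} \lesssim \inf_{\gj \in \Z^d}\Big\{\max_{j}h_j^{\beta_j}|p\et|_{j,\beta_j} + \sqrt{\tfrac{\normfty{p\et}\log n}{h_m n}} + \tfrac{\log n}{h_m n}\Big\}
\]
on the event $\Omega_{a\log n}$, where I have replaced $|\hat p|_{h_m}$ by $\normfty{p\et}$ via Proposition \ref{prop_comp_p-eth_p-hath}. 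The constants depend only on $\gr,d$ and the ratio $\log_-(h_m)/\log n$ is $O(1)$ in the range of relevant $\gj$.

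The core of the proof is then a deterministic optimization. With the constraint $\prod_i h_i = V$ and $h_j$ free, a Lagrange argument shows that $\max_j h_j^{\beta_j}|p\et|_{j,\beta_j}$ is minimized when all terms are equal, giving $h_j^* \propto |p\et|_{j,\beta_j}^{-1/\beta_j} V^{1/(d\beta)\cdot 1/\beta_j \cdot d\beta}$ and optimal bias $V^{\beta/d}L_\gbeta(p\et)$, using $\sum_j 1/\beta_j = d/\beta$. Balancing $V^{\beta/d}L_\gbeta(p\et)$ with $\sqrt{\normfty{p\et}\log n/(Vn)}$ yields $V \asymp (\normfty{p\et}\log n/(nL_\gbeta(p\et)^2))^{d/(2\beta+d)}$ and the first rate of the theorem; balancing instead with $\log n/(Vn)$ yields $V \asymp (\log n/(nL_\gbeta(p\et)))^{d/(\beta+d)}$ and the second. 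The two regimes correspond exactly to whether $\normfty{p\et}\geq L_\gbeta(p\et)^{d/(\beta+d)}(\log n/n)^{\beta/(\beta+d)}$ or not, i.e.\ which branch of the max in the penalty dominates. Rounding the continuous optimum $h_j^*$ to the nearest dyadic scale $2^{-j_i}$ costs only a constant factor, and the hypothesis $L_\gbeta(p\et)^{d/\beta} \leq n/\log n$ is exactly what is needed to ensure $V \leq 1$, so that a valid $\gj \in \N^d$ exists.

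The main obstacle I anticipate is the passage from the high-probability bound of Theorem \ref{thm_mod_select_gen} (valid on $\Omega_{a\log y}$ with probability $\geq 1 - 2y^{-a}$) to the in-expectation statement: this requires integrating tails of the form $y \mapsto y^{-a}$ against the $y$-dependent remainder terms, and choosing $a$ large enough (say $a > 2$) so that these integrals are finite and absorbed into the leading rate. A secondary subtlety is that $\pen_a(h_m)$ involves the random quantity $|\hat p|_{h_m}$, which must be controlled uniformly in $m$ via Proposition \ref{prop_comp_p-eth_p-hath}; since that control holds on the same event $\Omega_{a\log n}$ used for Theorem \ref{thm_mod_select_gen}, no additional union bound is needed.
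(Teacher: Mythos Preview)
Your plan is correct and matches the paper's approach: apply Theorem~\ref{thm_mod_select_gen} with Assumption~\ref{ass_kap_et} supplied by Theorem~\ref{thm_ass_lb_kap_poly}, bound the bias via Proposition~\ref{prop_approx_poly}, derandomize the penalty via Proposition~\ref{prop_comp_p-eth_p-hath}, optimize the resulting bias--penalty trade-off over $\gj$, and integrate the tail in $y$ to pass to expectation. One small correction: the collection $\cM_\gr$ is indexed by $\gj \in \Z^d$, not $\N^d$, so the hypothesis $L_{\gbeta}(p\et)^{d/\beta} \leq n/\log n$ is not about the \emph{existence} of a valid dyadic scale; its actual role is to guarantee that the optimal cell volume satisfies $V \gtrsim (\log n)/n$, so that $\log_-(h_m) = O(\log n)$ and the $a\log_-(h)$ term in $\pen_a$ does not dominate $\Gamma$, keeping the penalty of order $\sqrt{\normfty{p\et}\log n/(h_m n)} + \log n/(h_m n)$ as you use it.
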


Assume to simplify the discussion that $\gp \et = (p\et,\ldots,p\et))$ and that $p\et \in C^{\gbeta}(\R^d)$. Then, Theorem \ref{thm_rates} yields the correct \citep[Theorem 2]{Lepski2013} minimax convergence rate, $\bigl( n^{-1} \log n \bigr)^{\beta/(2\beta + d)}$, with respect to the sample size $n$. Moreover, the dependence of the upper bound on $p\et$ is explicitly expressed in terms of $\normfty{p\et}$ and the semi-norms $|p\et|_{j,\beta_j}$. The assumption $p\et \in C^{\gbeta}(\R^d)$, together with the fact that $p\et$ is a density, imply a bound on $\normfty{p \et}$ by a function of $\gbeta,d$ and the semi-norms $|p\et|_{j,\beta_j}$ (as detailed below). However, $\normfty{p\et}$ can be much smaller than this upper bound, even for fixed $|p\et|_{j,\beta_j} = L_j$, for example (in dimension $1$) if $p\et$ oscillates between $0$ and a small $t > 0$ on an interval of size $\approx \frac{1}{t}$. Theorem \ref{thm_rates} yields an improved bound in that case. In particular, $\hat{p}$  converges faster in the neighbourhood of $0$ (which is not a density, but is a uniform limit of densities). 

Consider now the class of functions
\[ \cC_{\gL,b}^{\gbeta} = \left\{ p \in C^{\gbeta}(\R^d) : \normfty{p} \leq b, \forall j \in \{1,\ldots,d \}, |p|_{j,\beta_j} \leq L_j \right\}, \]
as well as the class $\cP_{\gL,b}^{\gbeta}$ of probability density functions which belong to $\cC_{\gL,b}^{\gbeta}$.
Compared to the constants $Q_j$ which appear in \citet{Lepski2013}, the $L_j$ only constrain the Hölder semi-norms of maximal order $\beta_j$. An independent constant ($b$) bounds the sup-norm of the density $p\et$. As discussed above, these two quantities can indeed vary independently to some extent. Qualitatively, they play different roles, as $b$ only intervenes in the estimation error while the constants $L_j$ only affect the approximation error.

The function classes $\cP_{\gL,b}^{\gbeta}$ are non-decreasing as a function of $b$, moreover by Lemma A4.2 from the Supplementary Material \citep{Proofs}, %\ref{lem_bd_norm_Lbeta} 
there is a function $b_{max}(\gL,\gbeta)$ such that $b \mapsto \cP_{\gL,b}^{\gbeta}$ is strictly increasing for $b < b_{max}(\gL,\gbeta)$ and constant for all $b \geq b_{max}(\gL,\gbeta)$. Theorem \ref{thm_rates} implies the following minimax upper bound on the classes $\cP_{\gL,b}^{\gbeta}$:

\begin{cor} \label{cor_ubd_minmax_risk_aniso}
Let $\gbeta \in \R_+^d$ and $\frac{1}{\beta} = \frac{1}{d} \sum_{j = 1}^d \frac{1}{\beta_j}$. Let $\gL \in \mathbb{R}_+^d$ and
\[ L = \prod_{j = 1}^d L_j^{\frac{\beta}{d \beta_j}}. \]
For all $b \leq b_{max}(\gL,\gbeta)$ and all large enough $n$,
\[ \inf_{\tilde{p}} \sup_{p\et \in \cP_{\gL,b}^{\gbeta}} \E \left[ \normfty{\hat{p} - p\et} \right] \leq C b^{\frac{\beta}{2\beta + d}} L^{\frac{d}{2\beta + d}} \left( \frac{\log n}{n} \right)^{\frac{\beta}{2\beta + d}}, \]
where the infimum runs over all estimators computed from an $n-$sample drawn from $p\et$, and $C$ is a constant depending only on $\gbeta,d$.
\end{cor}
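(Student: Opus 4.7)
The plan is to combine Theorem~\ref{thm_rates} with a case analysis based on the size of the sup-norm bound $b$. Introduce the target rate $T_n(b) := b^{\beta/(2\beta+d)} L^{d/(2\beta+d)}(\log n/n)^{\beta/(2\beta+d)}$ and the threshold $R_n := L^{d/(\beta+d)}(\log n/n)^{\beta/(\beta+d)}$ that separates the two cases in Theorem~\ref{thm_rates}. The first step is a routine algebraic verification (raise both sides to the power $(2\beta+d)/\beta$ and compare exponents of $L$ and $\log n / n$) of the two identities
\[ b \leq T_n(b) \Longleftrightarrow b \leq R_n \qquad \text{and} \qquad R_n \leq T_n(b) \Longleftrightarrow b \geq R_n, \]
which together drive both halves of the argument.

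When $b \leq R_n$, the trivial estimator $\hat p \equiv 0$ already works: since $\normfty{p\et} \leq b$ for every $p\et \in \cP_{\gL,b}^\gbeta$, the risk is at most $b$, hence at most $T_n(b)$ by the first identity. When $b > R_n$, I would instead take $\hat p$ to be the moshun estimator $\hat p_{\cM_\gr}^{(h_{\cM_\gr},C_\square)}$ supplied by Theorem~\ref{thm_rates}. For any $p\et \in \cP_{\gL,b}^\gbeta$, one has $L_\gbeta(p\et) = \prod_j |p\et|_{j,\beta_j}^{\beta/(d\beta_j)} \leq \prod_j L_j^{\beta/(d\beta_j)} = L$ and $\normfty{p\et} \leq b$; the hypothesis $L_\gbeta(p\et)^{d/\beta} \leq n/\log n$ of Theorem~\ref{thm_rates} is then ensured as soon as $L^{d/\beta} \leq n/\log n$, which is the meaning of ``large enough $n$''. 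Applying Theorem~\ref{thm_rates}, two subcases arise: if $\normfty{p\et} \geq L_\gbeta(p\et)^{d/(\beta+d)}(\log n/n)^{\beta/(\beta+d)}$, monotonicity of the first upper bound in $\normfty{p\et}$ and $L_\gbeta(p\et)$ directly gives a bound of $c\, T_n(b)$; otherwise the second upper bound gives risk at most $c L_\gbeta(p\et)^{d/(\beta+d)}(\log n/n)^{\beta/(\beta+d)} \leq c R_n$, and the second identity combined with $b \geq R_n$ upgrades this to $c\, T_n(b)$.

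Taking the infimum in the corollary over the two candidate estimators — trivial if $b \leq R_n$, moshun otherwise, both choices depending only on the known parameters $\gL,b,\gbeta,n$ — then yields the stated bound. The only genuinely delicate point is the algebraic verification of the threshold identities above; everything else is a bookkeeping application of Theorem~\ref{thm_rates} together with monotonicity of its upper bounds in $\normfty{p\et}$ and $L_\gbeta(p\et)$. The hypothesis $b \leq b_{\max}(\gL,\gbeta)$ plays no active role in the proof: for $b > b_{\max}$ the class $\cP_{\gL,b}^\gbeta$ stabilizes at $\cP_{\gL,b_{\max}}^\gbeta$, and the conclusion $T_n(b)$ only becomes weaker.
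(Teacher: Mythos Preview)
Your proof is correct and follows the same route as the paper's: apply Theorem~\ref{thm_rates} to the moshun estimator with $\gr = \lfloor \gbeta \rfloor$, bound $L_\gbeta(p\et) \leq L$ and $\normfty{p\et} \leq b$ by membership in $\cP_{\gL,b}^\gbeta$, and verify algebraically that each branch of Theorem~\ref{thm_rates} is dominated by $T_n(b)$ once $b \geq R_n$. Your Case~1 (the trivial estimator when $b \leq R_n$) is harmless but superfluous, since $R_n \to 0$ and the clause ``for all large enough $n$'' already forces $b > R_n$; the paper accordingly works only with the moshun estimator.
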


The above rate of convergence is known to be optimal. As we shall now see, the above minimax upper bound is also optimal with respect to the constants $b,\gL$. A matching lower bound may be derived from a careful reading of the proof of \citet[Theorem 2]{Lepski2013},
resulting in the following theorem. 

\begin{thm} \label{thm_minimax_lbd_ani_smooth}
Let $\gbeta \in \R_+^d$ and $\frac{1}{\beta} = \frac{1}{d} \sum_{j = 1}^d \frac{1}{\beta_j}$. Let $p_b$ denote the isotropic, centered Gaussian pdf with norm $\normfty{p_b} = b$.
For all $L \in \R_+^d$ and all $b > 0$ such that $p_b \in C_{\frac{\gL}{2},+\infty}^{\gbeta}$ and all large enough $n$,
\[ \inf_{\tilde{p}} \sup_{p\et \in \cP_{\gL,b}^{\gbeta}} \E \left[ \normfty{\tilde{p} - p\et} \right] \geq C b^{\frac{\beta}{2\beta + d}} \left(\prod_{j = 1}^d L_j^{\frac{\beta}{\beta_j}} \right)^{\frac{1}{2\beta + d}} \left( \frac{\log n}{n} \right)^{\frac{\beta}{2\beta + d}}, \]
where the infimum runs over all estimators computed from an $n-$sample drawn from $p\et$, and $C$ is a constant depending only on $\gbeta,d$.
\end{thm}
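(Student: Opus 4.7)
My plan is a classical Fano-type minimax lower bound: construct a finite family $\{p_\omega : \omega \in \Omega\} \subset \cP_{\gL,b}^{\gbeta}$ of hypotheses of the form $p_\omega = p_b + \sum_{k=1}^M \omega_k \psi_k$, with anisotropically scaled, well-separated bumps $\psi_k$, and then invoke Fano's inequality. This is the construction underlying the proof of Theorem~2 in \citet{Lepski2013}; the work is to track the scaling constants so that one recovers the precise dependence on $b$ and on the $L_j$.

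First I fix a smooth, compactly supported $\psi:\R^d\to\R$ with $\int\psi\,d\mu=0$, $\normfty{\psi}=1$, support in $[-1,1]^d$, and finite directional seminorms $|\psi|_{j,\beta_j}$. For a parameter $\delta>0$ to be optimized, set anisotropic widths $h_j=\bigl(2|\psi|_{j,\beta_j}\,\delta/L_j\bigr)^{1/\beta_j}$ and let
\[
\psi_k(x)=\delta\,\psi\bigl((x_1-x_{k,1})/h_1,\ldots,(x_d-x_{k,d})/h_d\bigr),
\]
choosing the centres $x_k$ so that $M$ translates of the support of $\psi_k$ are pairwise disjoint and contained in the region $\{p_b\ge b/2\}$ (a region of volume $\asymp 1$ since $p_b$ is a Gaussian of sup-norm $b$). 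Under the anisotropic scaling, a direct calculation gives $|\psi_k|_{j,\beta_j}\le L_j/2$ and $\int\psi_k=0$, so each $p_\omega$ is a nonnegative probability density; using the hypothesis $p_b\in C_{\frac{\gL}{2},+\infty}^{\gbeta}$ and additivity of the directional seminorms across disjoint supports, one gets $|p_\omega|_{j,\beta_j}\le L_j$. The sup-norm constraint $\normfty{p_\omega}\le b$ is secured by placing the bump centres in a strip where $p_b\le b-\delta$ (possible since $p_b$ is Gaussian), at worst absorbing a universal constant. Finally, $\normfty{p_\omega-p_{\omega'}}=\delta$ as soon as $\omega\ne\omega'$.

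Second, each bump has support volume $V=\prod_j h_j = c\,\delta^{d/\beta}/\prod_j L_j^{1/\beta_j}$, so one may pack $M\asymp 1/V$ of them. By Varshamov--Gilbert, extract $\Omega\subset\{0,1\}^M$ of cardinality $\ge 2^{M/8}$ with pairwise Hamming distance $\ge M/8$. On the supports of the bumps one has $p_{\omega'}\ge b/4$, so
\[
\mathrm{KL}\bigl(p_\omega^{\otimes n},p_{\omega'}^{\otimes n}\bigr)\le \frac{4n}{b}\int(p_\omega-p_{\omega'})^2\,d\mu \le \frac{C' n\,\delta^2 V M}{b}.
\]
Fano's inequality then gives a minimax lower bound of order $\delta$ provided the right-hand side is at most $\tfrac{1}{16}\log|\Omega|\asymp M$, which reduces to $n\delta^2 V\le c_0 b$. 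Forcing this constraint to bite at $M\asymp \log n$ (equivalently $V\asymp 1/\log n$) yields
\[
\delta^{2+d/\beta}\asymp b\prod_j L_j^{1/\beta_j}\,\frac{\log n}{n}, \qquad \delta\asymp b^{\beta/(2\beta+d)}\Bigl(\prod_j L_j^{\beta/\beta_j}\Bigr)^{1/(2\beta+d)}\Bigl(\frac{\log n}{n}\Bigr)^{\beta/(2\beta+d)},
\]
matching the claim after the identity $\beta/(2\beta+d)\cdot(1/\beta_j)=\beta/[\beta_j(2\beta+d)]$.

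The main obstacle is ensuring that each $p_\omega$ really lies in $\cP_{\gL,b}^{\gbeta}$, not in a slightly enlarged class. Two pinch points must be handled: keeping $\normfty{p_\omega}\le b$ strictly (which forces the bumps into a subregion where $p_b<b$ and explains the `$\gL/2$' slack in the hypothesis on $p_b$), and arguing that the directional Hölder seminorms really add across disjoint supports when the scaling is anisotropic (which uses that $\psi$ is smooth enough for its finite differences to behave well under per-coordinate rescaling). Both are classical but bookkeeping-heavy, which is why the theorem is stated as a consequence of a careful reading of the argument in \citet{Lepski2013} rather than proved from scratch.
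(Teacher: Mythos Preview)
Your construction and scaling of the bumps are correct, but the extraction of the $\log n$ factor is where the argument breaks. With the Varshamov--Gilbert hypercube $\Omega\subset\{0,1\}^M$, the sup-norm separation between any two distinct hypotheses is exactly $\delta$, \emph{independent of their Hamming distance}, while the worst-case KL scales like $n\delta^2 V M/b$ and $\log|\Omega|\asymp M$. The $M$'s therefore cancel in the Fano condition, leaving $n\delta^2 V \le c b$ and hence only $\delta \lesssim \bigl(b\prod_j L_j^{1/\beta_j}/n\bigr)^{\beta/(2\beta+d)}$, without the logarithm. The sentence ``forcing this constraint to bite at $M\asymp\log n$'' has no mechanism behind it: nothing in your inequality rewards a particular value of $M$, and $V$ is already tied to $\delta$ through $V\asymp\delta^{d/\beta}/\prod_j L_j^{1/\beta_j}$, so you cannot independently impose $V\asymp 1/\log n$.

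The fix, which is what \citet{Lepski2013} does and what the paper follows, is to use \emph{sparse} hypotheses rather than the full hypercube: take $p_0=p_b$ and $p_k=p_b+\psi_k$ for $k=1,\dots,M$ (one bump each). These $M+1$ densities are still pairwise $\delta$-separated in $\normfty{\cdot}$, but now $\log|\Omega|\asymp\log M$ while each $\mathrm{KL}(p_k^{\otimes n},p_0^{\otimes n})$ is only $\asymp n\delta^2 V/b$ (a single bump differs). Fano's condition becomes $n\delta^2 V/b \le c\log M$; taking $M$ equal to the maximal number of disjoint bumps gives $\log M\asymp\log n$ at the optimum, which is precisely the missing $(\log n)^{\beta/(2\beta+d)}$ factor. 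The hypercube is the right device when the loss grows with Hamming distance (as for $L^p$, $p<\infty$); for $L^\infty$ it throws the logarithm away. As a minor aside, the region $\{p_b\ge b/2\}$ has volume $\asymp b^{-1}$, not $\asymp 1$ (a Gaussian of height $b$ has $\sigma\asymp b^{-1/d}$), though this only shifts $\log M$ by an additive constant and does not affect the rate.
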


Thus, the moshun-estimator adapts not only to the smoothness $\gbeta$ but also to the constants $\left(L_j \right)_{1 \leq j \leq d}$ and $b$, a more precise result than obtained in \citet{Lepski2013} (where the constants $Q_j$ bound both $L_j$ and $b$ along with intermediate derivatives). This property has not, to the best of our knowledge, been established for any estimator for density estimation in sup-norm, though such a result was known in the setting of white noise regression on $[0,1)^d$ under a Hölder regularity assumption \citep{bertin2004}.  

%\begin{acks}[Acknowledgments]
\section{Acknowledgements}
The author would like to thank the anonymous referees for their constructive suggestions which helped to improve the paper. The author would also like to acknowledge the support of ENSAI, where the last steps of the revision were carried out. 
%\end{acks}

\section{Funding}
The project received funding from the European Union's Horizon 2020 research program under grant agreement N° 811017.

%%%%%%%%%%%%%%%%%%%%%%%%%%%%%%%%%%%%%%%%%%%%%%
%% Supplementary Material, including data   %%
%% sets and code, should be provided in     %%
%% {supplement} environment with title      %%
%% and short description. It cannot be      %%
%% available exclusively as external link.  %%
%% All Supplementary Material must be       %%
%% available to the reader on Project       %%
%% Euclid with the published article.       %%
%%%%%%%%%%%%%%%%%%%%%%%%%%%%%%%%%%%%%%%%%%%%%%
%\begin{supplement}
%\stitle{Proofs}
%\sdescription{The supplement provides the remaining proofs.}
%\end{supplement}

%% if your bibliography is in bibtex format, uncomment commands:
\bibliographystyle{abbrvnat} % Style BST file (imsart-number.bst or imsart-nameyear.bst)
%\bibliography{bibliography}       % Bibliography file (usually '*.bib')
\bibliography{norme_infinie.bib}

\end{document}